\definecolor{newdarkblue}{RGB}{0,0,100}
    \def\namedlabel#1#2{\begingroup
    \def\@currentlabel{#2}%
    \label{#1}\endgroup
    }
\theoremstyle{plain}
    \newtheorem{theorem}{Theorem}[section]
    \newtheorem{proposition}[theorem]{Proposition}
    \newtheorem{lemma}[theorem]{Lemma}
    \newtheorem{corollary}[theorem]{Corollary}
    \newtheorem*{claim*}{Claim}
\theoremstyle{remark}
    \newtheorem{remark}[theorem]{Remark}
    \newtheorem{example}[theorem]{Example}
    \newtheorem{definition}[theorem]{Definition}
\newcommand{\rr}{{\mathbf R}}
\newcommand{\zz}{{\mathbf Z}}
\newcommand{\nn}{{\mathbf N}}
\newcommand{\cA}{{\cal A}}
\renewcommand{\P}{\mathbf{P}}
\newcommand{\Q}{\mathbf{Q}}
\newcommand{\bmu}{\boldsymbol{\mu}}
\DeclareMathOperator{\supp}{supp}
\newcommand{\Sc}{h_{\textnormal{c}}}
\newcommand{\Sr}{h_{\textnormal{r}}}
	\providecommand*{\diff}%
	{\@ifnextchar^{\DIfF}{\DIfF^{}}}
	\def\DIfF^#1{%
	\mathop{\mathrm{\mathstrut d}}%
	\nolimits^{#1}\gobblespace}
	\def\gobblespace{%
	\futurelet\diffarg\opspace}
	\def\opspace{%
	\let\DiffSpace\!%
	\ifx\diffarg(%
	\let\DiffSpace\relax
	\else
	\ifx\diffarg%
	\let\DiffSpace\relax
	\else
	\ifx\diffarg\{%
	\let\DiffSpace\relax
	\fi\fi\fi\DiffSpace}
    \newcommand{\dd}{\diff}
\newcommand{\Exp}[1]{\mathrm{e}^{#1}}
\begin{document}

\title{On the Ziv--Merhav theorem beyond Markovianity II: \\ {leveraging the thermodynamic formalism}}
\author{N.\ Barnfield\textsuperscript{a}, R.\ Grondin\textsuperscript{a}, G.\ Pozzoli\textsuperscript{b} and R.\ Raqu\'epas\textsuperscript{c}}
\date{}

\maketitle

\begin{center}
  \begin{minipage}[b]{0.5\textwidth}
    \small
    \centering
    a. McGill University \\
    Department of Mathematics and Statistics \\
    Montr\'{e}al QC, Canada
  \end{minipage}%
  \begin{minipage}[b]{0.5\textwidth}
    \small
    \centering
    b. CY Cergy Paris Universit\'e\\
    Department of Mathematics\\
    Cergy-Pontoise, France
  \end{minipage}  
  
  \vspace{1em}
  
  \begin{minipage}[b]{0.45\textwidth}
    \small
    \centering
    c. New York University \\
    Courant Institute of Mathematical Sciences \\
    New York NY, United States\\
  \end{minipage}
\end{center}

\begin{abstract}
    We prove asymptotic results for a modification of the cross-entropy estimator originally introduced by Ziv and Merhav in the Markovian setting in 1993. Our results concern a more general class of decoupled measures on {shift spaces over a finite alphabet} and in particular imply strong asymptotic consistency of the modified estimator for all pairs of 
    functions of stationary, irreducible, finite-state Markov chains satisfying a mild decay condition.
    Our approach is based on the study of a rescaled cumulant-generating function called the cross-entropic pressure, importing to information theory some techniques from the study of large deviations within the thermodynamic formalism.

    \smallskip
    \noindent \textit{MSC2020} \ Primary 94A17, 37M25; secondary 60F15, 37D35

    \noindent \textit{Keywords} \ Ziv--Merhav parsing, cross entropy, strong consistency, almost sure convergence, thermodynamic formalism, hidden Markov models
\end{abstract}


\section{Introduction}

Entropy, cross entropy and relative entropy are key quantities in information theory and many adjacent fields: statistical physics, dynamical systems, pattern recognition, etc. The problem of estimating them from data, making as few assumptions as possible on the sources generating the data, has a long history. 
In 1993, Ziv and Merhav proposed, based on the seminal work~\cite{LZ78}, a procedure for estimating the (specific) cross entropy 
\begin{equation}
\label{eq:def-cross}
    \Sc(\Q|\P)\coloneqq \lim_{n\to\infty}-\frac{1}{n}\sum_{a\in\mathcal{A}^n}\Q[a]\ln \P[a]
\end{equation}
for two ergodic measures~$\P$ and~$\Q$ on a space of sequences with values in some finite alphabet~$\cA$.\footnote{Existence of the limit is discussed in the next section.}
Roughly summarized, their procedure entails counting the number $c_N^{\textnormal{ZM}}(y|x)$ of words in a sequential parsing of~$y_1^N$ using the longest possible strings that can be found in~$x_1^N$, where $y$ and $x$ are produced respectively by the sources $\Q$ and $\P$,
and then computing 
\begin{equation}
\label{eq:original-ZM-est}
    Q_N^{\textnormal{ZM}}(y,x) \coloneqq \frac{c_N^{\textnormal{ZM}}(y|x) \ln N}{N}
\end{equation}
as an estimator for~$\Sc(\Q|\P)$; see~\cite[\S{II}]{MZ93} {and Example~\ref{ex:parsings}} below. {This provided an operational point of view on cross entropy (and thus on relative entropy, as explained below) that was not in the literature on statistical mechanics or dynamical systems.} They have shown that if $\P$ and $\Q$ are ergodic Markov measures, then this estimator is strongly asymptotically consistent in the sense that 
\begin{equation}
\label{eq:as-conv-1}
    \lim_{N\to\infty} Q_N^{\textnormal{ZM}}(y,x) =  \Sc(\Q|\P)
\end{equation}
for $(\P\otimes\Q)$-almost every~$(x,y)$. As a particular case, if $\P=\Q$ then this suggests an entropy estimator.
In a recent work, we have extended this almost sure result to a class of decoupled measures that includes regular g-measures and 1-dimensional Gibbs measures for summable interactions satisfying very mild assumptions, but not all hidden-Markov models with finite hidden alphabet~\cite{BGPR}. This seemed to be the best one could do by following the parsing procedure in the spirit of the original proof.

In this sequel, we propose a modification of Ziv and Merhav's estimator, denoted $Q_N$, for which we can prove {considerably more general} results relying on the properties of the \emph{cross-entropic pressure}
\begin{equation}
\label{eq:def-pres}
    \overline{q}(\alpha) \coloneqq \limsup_{n\to\infty} \frac 1n \ln \sum_{a \in \supp \P_n} \Exp{-\alpha\ln\P[a]} \Q[a].
\end{equation}
{This approach is inspired by the thermodynamic formalism and statistical mechanics of lattice gases. Notably, it does not involve the construction of any auxiliary parsing; cf.~\cite{MZ93,BGPR}.}
One of our main results, Theorem~\ref{thm:sc-gen}, confines in the almost sure sense the limit points of $Q_N$ to the subdifferential of $\overline{q}$ at the origin, as soon as the shift-invariant measures $\P$ and $\Q$ both satisfy variants of the upper- and selective lower-decoupling assumptions of~\cite{CJPS19} and some mild nondegeneracy assumption; see Conditions~\ref{it:UD},~\ref{it:SLD}  and~\ref{it:ND} below. 
Early numerical experiments using hidden-Markov models suggest that the practical performance is very similar to that of the original ZM estimator.

The rest of the paper is organized as follows. In Section~\ref{sec:setting}, we properly define the objects of interest, state a central theorem, and argue that if $\overline{q}(\alpha)$ is a regular enough limit near $\alpha = 0$, then this theorem immediately implies almost sure convergence to the cross entropy, as in~\eqref{eq:as-conv-1}. This central result is proved in Section~\ref{sec:proof}, building on ideas from~\cite{Ko98,CJPS19,CDEJR23w,CR23}. In Section~\ref{sec:open-prob}, we state and discuss an open problem concerning the passage to~\eqref{eq:as-conv-1} in the absence of regularity of~$\bar{q}$. It is framed as a ``rigidity'' problem for the cross-entropy analogue of the Shannon--McMillan--Breiman theorem, and we show that partial results on this problem yield strong asymptotic consistency in special cases. Finally, important examples, including hidden-Markov measures and $\psi$-mixing measures, are discussed in Section~\ref{sec:Examples}.

\paragraph*{Acknowledgments}
    The authors would like to thank G.\ Cristadoro and V.\ Jak\v{s}i\'{c} for stimulating discussions on the topic of this article. {We are particularly grateful to N.\ Cuneo for sharing personal notes on generalizations of~\cite[\S{4.1}]{BJPP18}, which allowed us to prove Lemma~\ref{lem:q-exists} and Theorem~\ref{thm:left-der}.} 

    The research of NB and RR was partially funded by the \emph{Fonds de recherche du Qu\'ebec\,---\,Nature et technologies} (FRQNT) and by the Natural Sciences and Engineering Research Council of Canada (NSERC). The research of RG was partially funded by the {Rubin Gruber Science Undergraduate Research Award} and {Axel W Hundemer}. The research of GP was supported by the CY Initiative of Excellence through the grant Investissements d'Avenir ANR-16-IDEX-0008, and was done under the auspices of the \emph{Gruppo Nazionale di Fisica Matematica} (GNFM) section of the \emph{Istituto Nazionale di Alta Matematica} (INdAM) while GP was a post-doctoral researcher at University of Milano-Bicocca (Milan, Italy). Part of this work was done during a stay of the four authors in Neuville-sur-Oise, funded by CY Initiative (grant \emph{Investissements d'avenir} ANR-16-IDEX-0008).

\section{Setting and main result}
\label{sec:setting}

Let $\cA$ be a finite set and let $T : \cA^\nn \to \cA^\nn$ be the left shift. We use $[a] \coloneqq  \{x \in \cA^\nn : x_1^n = a\}$ for basic cylinder sets, where $x_1^n$ is used for the symbols $x_1x_2 \dotsc x_n$ in the sequence $x = (x_k)_{k=1}^\infty$. Let us introduce the \emph{waiting times}
\[ 
    W_\ell(y,x) \coloneqq  \inf\{r \in \nn : x_{r}^{r+\ell - 1} = y_1^\ell\},
\]
and \emph{match lengths}
\[ 
    \Lambda_N(y,x) \coloneqq   \sup\{\ell\in\nn : W_\ell(y,x) \leq N-\ell+1 \}.
\]
Their relation to entropic quantities has been pioneered in~\cite{WZ89} and progressively refined in~\cite{Sh93,Ko98,CDEJR23w,CR23}.

\begin{definition}\label{def:mZM}
 The \emph{modified Ziv--Merhav (mZM) parsing} of $y_1^N$ with respect to~$x_1^N$ is defined sequentially as follows: 
    \begin{itemize}
        \item The \emph{first word} $\overline{w}^{(1,N)}(y,x)$ is the shortest prefix of $y_1^N$ that does not appear in $x_1^N$, that is 
        \[
            \overline{w}^{(1,N)}(y,x) \coloneqq  y_1^{\min\{N,\Lambda_N(y,x)+1\}}
        \]
        or all of $y_{1}^N$ if no such prefix exists\,---\,which would terminate the procedure.
        \item Given that the first~$i$ words collected so far have lengths summing to~$L_{i,N} < N$, the \emph{next word} $\overline{w}^{(i+1,N)}(y,x)$ is the shortest prefix of~$y_{L_i+1}^N$ that does not appear in $x_1^N$, that is 
        \[
            \overline{w}^{(i+1,N)}(y,x) \coloneqq  y_{L_i+1}^{\min\{N,L_i + \Lambda_N(T^{L_{i}}y,x)+1\}},
        \]
        or all of $y_{L_i+1}^N$ if no such prefix exists\,---\,which would terminate the procedure.
    \end{itemize}
    The \emph{mZM estimator} is then given by
    \[ 
        Q_N(y,x) \coloneqq  \frac{c_N(y|x) \ln N}{N - c_N(y|x)},
    \]
    where $c_N(y|x)$ is the number of words in the mZM parsing of~$y_1^N$ with respect to~$x_1^N$.
\end{definition}

{
\begin{example}
\label{ex:parsings}
    Consider the following two sequences with values in~$\cA=\{\mathsf{0},\mathsf{1}\}$:
    \begin{align*}
        x &= \mathsf{1010010101001011101010011}\dotsc, \\
        y &= \mathsf{0101100101100101010100110}\dotsc
    \end{align*}
    In the original ZM parsing, the first word is the longest prefix of $y_1^{25}$ that appears somewhere in~$x_1^{25}$, which is $\mathsf{01011}$, and the second word is obtained in the same way after having removed $\mathsf{01011}$ at the beginning of $y_1^{25}$, and so on and so forth until 
    \[ 
        y_1^{25} = \mathsf{01011|001011|00101010|10011|0}.
    \]
    The estimated cross entropy between the sources is 
    \[ 
        Q^{\textnormal{ZM}}_{25}(y,x) = \frac{5 \ln 25}{25} \approx 0.64.
    \]
    In the mZM parsing, the first word is the shortest prefix of $y_1^{25}$ that does not appear anywhere in~$x_1^{25}$, which is $\mathsf{010110}$, and the second word is obtained in the same way after having removed $\mathsf{010110}$ (i.e.\ after removing one more letter than in the ZM parsing) at the beginning of $y_1^{25}$, and so on and so forth until 
     \[ 
        y_1^{25} = \mathsf{010110|010110|01010101|00110}.
    \]
    The estimated cross entropy between the sources is 
    \[ 
        Q_{25}(y,x) = \frac{4 \ln 25}{25-4} \approx 0.61.
    \]
\end{example}
}

At finite~$N$, our estimator differs from the original ZM estimator in two ways. First, we are counting words that consist of substrings of~$y_1^N$ that cannot be found in~$x_1^N$ instead of substrings of~$y_1^N$ that can be found in~$x_1^N$. Second, we are dividing by $N-c_N(y|x)$ instead of~$N$.\footnote{In cases $c_N(y|x) = N$, i.e.\ in cases where we are dividing by~$0$, no bigram in $x_1^N$ can be found in~$y_1^N$ and it is reasonable to estimate the cross entropy to be infinite.} Each of these modifications has two motivations. 

The change from longest substring that can be found to shortest substring that cannot be found mimics more closely a Lempel--Ziv parsing on the concatenation $x_1^N y_1^{n}$ that is sometimes used in practice as a variant of the ZM algorithm \cite{KPK01,BCL02,CF05,B+08}. Also, working with this variant bypasses some technical estimates that are essential to Ziv and Merhav's argument, and which may fail beyond the scope of our previous work~\cite{BGPR}; see Section~3.4 there.

If one believes that the original ZM estimator is close to being unbiased, then the change from $N$ to $N-c_N(y|x)$ can be seen as an attempt to correct the bias introduced by our first change. Indeed, the~$N$ in the denominator in~\eqref{eq:original-ZM-est} is\,---\,as opposed to the $\ln N$ in the numerator\,---\,tailored to the length of~$y_1^N$, so we are correcting for the fact that, from the ZM point of view, our first change drops the ``effective length'' of~$y_1^N$ from $N$ to something closer to $N-c_N(y|x)$. {In Example~\ref{ex:parsings}, if we had divided by $25$ instead of $25-4$, then we would have obtained an estimation of approximately $0.52$ instead of $0.61$, which would be further from the original ZM estimation of $0.64$.} It will appear as a consequence of an intermediate result that $c_N = o(N)$ under our assumptions, so that this correction is irrelevant in the limit; see Remark~\ref{rem:cN-is-oN}. This arguably \emph{ad hoc} correction to the denominator has the following two advantages: it naturally appears in some technical arguments and seems to improve the estimation at finite~$N$; see Section~\ref{ssec:iii-iv} (especially  Remark~\ref{rmk:corr}) and Section~\ref{ssec:hmm}.

Throughout this work, two stationary sources of strings of symbols from~$\cA$ are described by shift-invariant measures\footnote{Throughout this article, all measure-theoretic notions are considered with respect to the $\sigma$-algebra generated by basic cylinders, or equivalently with respect to the Borel $\sigma$-algebra for the product topology built from the discrete topology on~$\cA$.} $\P$ and~$\Q$ on $\cA^\nn$, and we use
\begin{align*}
     \supp \P_n &\coloneqq  \{a \in \cA^n : \P[a] > 0\}, 
     &
     \supp \P &\coloneqq  \{x \in \cA^\nn : \P[x_1^n] > 0 \text{ for all } n \}, \\
     \supp \Q_n &\coloneqq  \{a \in \cA^n : \Q[a] > 0\}, 
     &
     \supp \Q &\coloneqq  \{x \in \cA^\nn : \Q[x_1^n] > 0 \text{ for all } n \}.
\end{align*}
We only consider the case where the sources are independent from one another, i.e.\ samples from the product measure~$\P \otimes \Q$ on~$\cA^\nn \times \cA^\nn$.

We recall that the (specific) \emph{cross entropy} of~$\Q$ with respect to~$\P$, i.e.\ the limit~\eqref{eq:def-cross}, need not exist in full generality, but in cases where it exists, the (specific) \emph{relative entropy} $\Sr(\Q|\P)$ of $\Q$ with respect to~$\P$ can be decomposed\,---\,or even defined\,---\,as
\[
    \Sr(\Q|\P) := \Sc(\Q|\P)-h(\Q), 
\]
where the (specific) \emph{entropy} $h(\P)$ of~$\P$ is the limit
\[
    h(\P)\coloneqq \lim_{n\to\infty}-\frac{1}{n}\sum_{a\in \mathcal{A}^n}\P[a]\ln \P[a].
\]
This last limit is guaranteed to exist in~$[0, \ln (\#\cA)]$ by subadditivity of entropy for measures on products of finite sets and Fekete's lemma. {While the cross entropy naturally  appears e.g.\ in discussions of the Kraft--McMillan inequality and optimal code length~\cite[\S{5.4}]{CoTh}, one could argue that the relative entropy appears directly in more fundamental applications. But since estimation of the entropy term in the decomposition of the relative entropy is by now well understood, there is no serious harm in focusing on the cross entropy term.}

As in~\cite{CJPS19,BCJPPExamples,CDEJR23w,CR23}, the following two decoupling conditions are at the heart of our arguments.
\begin{description}
    \item[{UD}\namedlabel{it:UD}{UD}]
    {A measure $\P$ is said to be \emph{upper decoupled} if there exist
    a nonnegative, $o(n)$-sequence $(k_n)_{n=1}^\infty$ and an integer constant $\tau\geq 0$
    with the following property: for all~$n\in\nn$, $a \in \supp \P_n$, $\xi\in\supp\P_\tau$, $m\in\nn$ and $b \in \supp \P_m$,
    \begin{equation}
        {\P[a \xi b]} \leq \Exp{k_n}{\P[a]\P[b]}.
    \end{equation}}
    \item[{SLD}\namedlabel{it:SLD}{SLD}]
    {A measure $\P$ is said to be \emph{selectively lower decoupled} if there exist a nonnegative, $o(n)$-sequence $(k_n)_{n=1}^\infty$ and an integer constant $\tau\geq 0$  
     with the following property: for all~$n\in\nn$, $a \in \supp \P_n$, $m\in\nn$ and $b \in \supp \P_m$, there exists $0\leq\ell=\ell(a,b)\leq\tau$ and $\xi=\xi(a,b) \in \supp\P_\ell$ such that 
     \begin{equation}
         {\P[a \xi b]}  \geq \Exp{-k_n} {\P[a]\P[b]}.
     \end{equation}}
\end{description}

\begin{remark}
\label{rem:increase-params}
    By taking for each $n$ the maximum of the first $n$ terms, one can assume without loss of generality that the sequence $(k_n)_{n\in\nn}$ is nondecreasing. 
    Furthermore, by taking the index-wise maximum, one can consider the same sequence when assuming both conditions for several measures. One can also show that, both in~\ref{it:UD} and~\ref{it:SLD}, one can always replace $\tau$ with some $\tau'\geq \tau$, and thus consider a common~$\tau$ when assuming both conditions for several measures; see~\cite[\S{2.2}]{CJPS19}.
\end{remark}

\begin{remark}
\label{rem: tau_n}
    The reader familiar with~\cite{CJPS19,CDEJR23w,CR23} will notice the following difference: throughout the present work, the ``gap size'' $\tau$ in~\ref{it:UD} and~\ref{it:SLD} is a constant that is not allowed to grow with~$n$ (the length of the first string~$a$). Adapting the proofs of the following results to the case $\tau_n = o(n)$ requires only minor changes, if any: Lemma~\ref{lem:a-priori-bounds-a}, Lemma~\ref{lem:cross-SMB-gap}, Lemma~\ref{lem:q-exists}, Theorem~\ref{thm:abstract-modified-parsing} and Proposition~\ref{prop:diff-minus}. However, as is, the proofs of Proposition~\ref{prop:diff-plus} and Theorem~\ref{thm:left-der} \emph{do not} allow for such generalizations. 
    
    To salvage the conclusion of Proposition~\ref{prop:diff-plus} (and thus of Theorem~\ref{thm:sc-gen}) in the case~$\tau_n = o(n)$ one can make the extra assumption that $\P$ satisfies~\ref{it:SE} in order to leverage the improved bounds from Remark~\ref{rem:uniform-if-SE}. As for Theorem~\ref{thm:left-der}, allowing $\tau_n = o(n)$ for the~\ref{it:UD} and~\ref{it:SLD} properties of~$\P$ causes no problem, but the use of $\tau_n = O(1)$ for the~\ref{it:UD} property of~$\Q$ is crucial to the argument.
\end{remark}

Recall that we have introduced the cross-entropic pressure $\overline{q} = \limsup_{n\to\infty} \tfrac 1n q_n$ where
\begin{equation*}
     q_n(\alpha)\coloneqq  \ln \sum_{a \in \supp \P_n} \Exp{-\alpha \ln \P_n[a]} \Q[a].
\end{equation*}
Note that this is not the rescaled cumulant-generating function for the estimator~$Q_N$, but rather for the sequence of logarithmic probabilities in the cross-entropy analogue of the Shannon--McMillan--Breiman theorem discussed in Section~\ref{ssec:proof-prelim}\,---\,sometimes called the Moy--Perez theorem~\cite{Mo61,Ki74,Or85,Ba85}.

We will use the notation~$q(\alpha)$ for the limit when it exists. Note that $\overline{q}$ is a nondecreasing, (possibly improper) convex function as a limit superior of functions that are nondecreasing, and convex by H\"older's inequality; the same is true for~$q$ when it exists. It is also easy to see that $q(0)$ exists and equals~$0$. Relations between this cross-entropic pressure and the cumulant generating functions of waiting times have been studied extensively in~\cite{AACG22,CR23}.

We will denote by $[D_- \overline{q}(0),D_+ \overline{q}(0)]$ the interval delimited by the (possibly infinite) left and right derivatives of~$\overline{q}$ at the origin.
The following is a relatively mild nondegeneracy condition.
\begin{description}
    \item[{ND}\namedlabel{it:ND}{ND}] A pair $(\P,\Q)$ is said to be \emph{nondegenerate} if the limit superior ${\overline{q}(-1)}$
is negative. 
\end{description}
\begin{remark}
\label{rem:ND-decay}
    Using monotonicity and convexity, Condition~\ref{it:ND} can be reformulated as the requirement that $\overline{q}$ is not identically vanishing to the left of the origin.
    It is straightforward to show that~\ref{it:ND} will hold if there exists $\gamma_+ < 0$ such that, for all $n\in\nn$ large enough, either $\max_{a \in \supp \Q_n} \P[a] \leq \Exp{\gamma_+ n}$ or $\max_{a \in \supp \P_n} \Q[a] \leq \Exp{\gamma_+ n}$, i.e.\ if one of the two measures decays (at least) exponentially fast on the support of the other. We will come back on this point in the context of examples in Section~\ref{sec:Examples}. We will further discuss~\ref{it:ND} in Remark~\ref{rem:ND-left-der}.
\end{remark}

These conditions serve as the hypotheses for a central theorem, Theorem~\ref{thm:sc-gen} below. In fact, most of what remains of the present work is dedicated to proving, discussing, refining and finding alternatives to different building blocks of this theorem. While we use this theorem as a guiding thread, it should be noted that, in some instances, the refinements and alternatives (such as Theorem~\ref{thm:abstract-modified-parsing} and Corollaries~\ref{cor:UD-tau-0} and~\ref{cor:wG}) could be of greater or equal interest.

\begin{theorem}
\label{thm:sc-gen}
    Suppose that $\P$ satisfies~\ref{it:SLD} and~\ref{it:UD}, that $\Q$ satisfies~\ref{it:UD}, and that~\ref{it:ND} holds. If, in addition $\supp \Q \subseteq \supp \P$, then 
    \begin{subequations}
    \label{eq:subdiff-bounds}
    \begin{alignat}{1}
        D_- \overline{q}(0)
        &\leq \liminf_{N\to\infty} Q_N(y,x)  \label{eq:subdiff-lb}
        \\
        &\leq \limsup_{N\to\infty} Q_N(y,x)\leq D_+\overline{q}(0)
        \label{eq:subdiff-ub}
    \end{alignat}
    \end{subequations}
    for $(\P\otimes\Q)$-almost every~$(x,y)$.
\end{theorem}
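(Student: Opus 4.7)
The plan is to pass from the cross-entropic pressure $\overline{q}$ to an almost-sure bound on $-\tfrac{1}{n}\ln\P[y_1^n]$, then to almost-sure uniform-in-shift bounds on the match lengths $\Lambda_N(T^L y, x)$, and finally to the claimed bounds on $c_N$ via the identity $\sum_i \ell_i = N$ underlying the mZM parsing. The backbone is a cross-entropic Shannon--McMillan--Breiman (Moy--Perez) estimate: for each $\alpha\neq 0$, Markov's inequality applied to $\P[y_1^n]^{-\alpha}$ together with the definition of $\overline{q}(\alpha)$ produces summable-in-$n$ tail bounds for $-\tfrac{1}{n}\ln\P[y_1^n]$; Borel--Cantelli, combined with the convexity of $\overline{q}$ (so that $\overline{q}(\alpha)/\alpha\to D_\pm\overline{q}(0)$ as $\alpha\to 0^\pm$), yields, $\Q$-almost surely,
\[
    D_-\overline{q}(0)\;\leq\;\liminf_{n\to\infty}\left(-\tfrac{1}{n}\ln\P[y_1^n]\right)\;\leq\;\limsup_{n\to\infty}\left(-\tfrac{1}{n}\ln\P[y_1^n]\right)\;\leq\;D_+\overline{q}(0).
\]
Condition~\ref{it:ND} guarantees that the lower bound is nontrivial, while~\ref{it:SE} and the inclusion $\supp\Q\subseteq\supp\P$ control the exceptional terms entering the relevant series.

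For the upper bound $\limsup_N Q_N\leq D_+\overline{q}(0)$ one shows matches are long. The key tool is a Kac-type return-time estimate, available under~\ref{it:UD}\,+\,\ref{it:SLD}: for every $\ell$ and every $a\in\supp\P_\ell$,
\[
    \P\bigl[W_\ell(a,x)>M\bigr]\;\leq\;\Exp{O(k_\ell)}\exp\!\Bigl(-c\,\P[a]\,M/(\ell+\tau_\ell)\Bigr).
\]
Fix small $\delta,\eta>0$ and take $\ell=\ell(N)\sim(1-\eta)\ln N/(D_+\overline{q}(0)+\delta)$. By the Moy--Perez step applied to the shifted string and shift-invariance of $\Q$, $\P[(T^L y)_1^\ell]\geq\Exp{-\ell(D_+\overline{q}(0)+\delta)}\geq N^{-(1-\eta)}$ for all $L\leq N$ simultaneously, $\Q$-almost surely eventually. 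Plugging $M=N-\ell+1$ into the return-time bound gives $\P[W_\ell((T^L y)_1^\ell,x)>M\mid y]=o(N^{-2})$ since $k_\ell,\tau_\ell=o(\ell)$, and a union bound over $L\leq N$ yields $\Lambda_N(T^L y,x)\geq\ell$ uniformly in $L$, eventually almost surely. Hence every mZM word has length at least~$\ell$, so $c_N\cdot\ell\leq N$, and~\eqref{eq:subdiff-ub} follows upon letting $\delta,\eta\to 0$; the bound $c_N=o(N)$ of Remark~\ref{rem:cN-is-oN} is a byproduct.

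The lower bound $\liminf_N Q_N\geq D_-\overline{q}(0)$ is dual and simpler: matches cannot be too long. For $\ell\sim(1+\eta)\ln N/(D_-\overline{q}(0)-\delta)$ a first-moment union bound gives $\P[W_\ell((T^L y)_1^\ell,x)\leq N-\ell+1\mid y]\leq(N-\ell+1)\P[(T^L y)_1^\ell]=O(N^{-\eta})$ on the Moy--Perez event $\P[(T^L y)_1^\ell]\leq\Exp{-\ell(D_-\overline{q}(0)-\delta)}$; a further union bound over $L\leq N$ forces $\Lambda_N(T^L y,x)<\ell$ for all $L\leq N$ eventually, each mZM word has length at most $\ell$, and $c_N\cdot\ell\geq N$ yields~\eqref{eq:subdiff-lb}. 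The main technical obstacle is the Kac-type return-time estimate underlying the upper bound: the decoupling rates $k_\ell$, $\tau_\ell$ must remain sufficiently small on the scale $\ell\sim\ln N$ so that $\Exp{O(k_\ell)}$ is absorbed by the exponential smallness in $N$ of the right-hand side, with~\ref{it:SE} used to rule out pathologically rare strings $y_1^\ell$ on which the $\P[a]$-factor in the exponent becomes uncontrollably small.
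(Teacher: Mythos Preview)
Your proposal contains a genuine gap at the ``uniformity over shifts'' step, in both directions.

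For the upper bound, you assert that $\P[(T^{L}y)_{1}^{\ell}]\geq \Exp{-\ell(D_{+}\overline q(0)+\delta)}$ \emph{for all $L\leq N$ simultaneously}, with $\ell\sim(1-\eta)\ln N/(D_{+}\overline q(0)+\delta)$. The Moy--Perez step you describe (Markov at level $\alpha>0$, then Borel--Cantelli) gives, for each fixed realization~$y$, this inequality for all $\ell\geq n_{0}(y)$; by shift invariance the same holds with $y$ replaced by $T^{L}y$, but the threshold $n_{0}(T^{L}y)$ depends on~$L$, and there is no reason it should be $\leq\ell\sim\ln N$ uniformly over $L\leq N$. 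If instead you try a direct union bound over~$L$, the Markov bound at level~$\alpha$ only yields
\[
    \Q\bigl\{-\tfrac{1}{\ell}\ln\P[y_{1}^{\ell}]>D_{+}\overline q(0)+\delta\bigr\}\leq \Exp{\ell(\overline q(\alpha)-\alpha(D_{+}\overline q(0)+\delta)+o(1))},
\]
and to make $\overline q(\alpha)/\alpha$ close to $D_{+}\overline q(0)$ you are forced to take $\alpha$ small, so the exponent is at best of order $-\alpha\delta\,\ell$. With $\ell=O(\ln N)$ this is only $N^{-c\alpha\delta}$ with $c\alpha\delta\ll 1$, which cannot absorb the factor~$N$ from the union over~$L$. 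The lower-bound direction has the same defect: your first-moment estimate $\P[W_{\ell}\leq N\mid y]=O(N^{-\eta})$ followed by a union over $L\leq N$ gives only $O(N^{1-\eta})$, which does not vanish for small~$\eta$; taking $\eta$ large instead multiplies the length scale and yields only $\liminf_{N}Q_{N}\geq D_{-}\overline q(0)/(1+\eta)$, losing the sharp constant.

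This is precisely the obstruction the paper's proof is built to avoid. The paper does \emph{not} attempt to control each match length with the sharp constant. It first proves crude a~priori bounds $\ell_{-,N}\leq|\overline w^{(i,N)}|\leq\ell_{+,N}$ (Lemma~\ref{lem:a-priori-bounds-a}) with non-sharp constants coming from~\ref{it:ND} and~\ref{it:SE}; these survive the union bound because the constants are not tied to $D_{\pm}\overline q(0)$. The sharp constants are then recovered by controlling the \emph{sums} $\sum_{i}-\ln\P[\overline w^{(i,N)}]$ and $\sum_{i}-\ln\P[\underline w^{(i,N)}]$: Theorem~\ref{thm:abstract-modified-parsing} relates these sums to $c_{N}\ln N$, and Proposition~\ref{prop:diff} relates them to $D_{\pm}\overline q(0)$ via a grouping of the $c_{N}$ words into $\sim N^{r}$ well-separated packets (the sets $\mathcal I_{s}$). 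The separation permits~\ref{it:UD} on~$\Q$ to factor the joint law, and a single exponential-Chebyshev bound on the product then gives decay of order $\Exp{-\delta N^{r}\ell_{-}}$, which comfortably beats the combinatorial factor $\Exp{O(N^{r}\ln\ln N)}$ arising from the choice of word positions and lengths. The passage through sums, rather than termwise control of all $N$ shifts, is the missing idea in your outline.
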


\begin{remark}
    We are making an assumption on how the supports of~$\P$ and~$\Q$ relate to each other. This assumption amounts to a relation of absolute continuity between the marginals, and will be enforced throughout the rest of the article. However, note that a straightforward argument using Birkhoff's ergodic theorem shows that if $\supp \Q \not\subseteq \supp \P$ and $\Q$ is ergodic, then $Q_N(y,x) \to \infty$ as $N \to \infty$ for $(\P\otimes\Q)$-almost every~$(x,y)$. This can be interpreted as consistent with~\eqref{eq:subdiff-bounds} (and also with~\eqref{eq:when-diff} below), provided that one uses the appropriate arithmetic conventions for $0^{-\alpha}$, $\ln 0$, $\ln \infty$, $0 \pm \infty$ and so on.
\end{remark}

\begin{remark}
\label{rem:ND-left-der}
    We are also assuming that~\ref{it:ND} holds. Note that~\ref{it:ND} can only fail if $D_-\overline{q}(0) = 0$, in which case~\eqref{eq:subdiff-lb} trivially holds. Furthermore, we will see in Section~\ref{ssec:left-deriv} that, under some additional conditions, $D_-\overline{q}(0) = 0$ in turn implies that $\Sc(\Q|\P) = 0$, and hence that both $h(\Q) = 0$ and $\Sr(\Q|\P) = 0$, which is a particularly degenerate case.
\end{remark}

The strength of the conclusions that one can directly draw from Theorem~\ref{thm:sc-gen} will depend on further regularity properties of the pressure at the origin, as made explicit in the next corollary. We emphasize that, under the above decoupling conditions, such properties are by no means guaranteed; see Section~\ref{sec:Examples} and~\cite{CJPS19,BCJPPExamples,CR23}.
However, they hold for measures that fit within the classical uniqueness regimes of the thermodynamic formalism; see e.g.~\cite[\S{4.3}]{Ke98} and~\cite[\S{4}]{Wa01}.

\begin{corollary}
\label{cor:when-diff}
    Suppose, in addition to the hypotheses of Theorem~\ref{thm:sc-gen}, that $q$ exists as a finite limit on an open interval containing the origin, and that $q$ is differentiable at the origin. Then,
    \begin{equation}
    \label{eq:when-diff}
        \lim_{N\to\infty} Q_N(y,x)  
        \\
        = \Sc(\Q|\P)
    \end{equation}
    for $(\P\otimes\Q)$-almost every~$(x,y)$.
\end{corollary}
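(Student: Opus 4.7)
The plan is to use Theorem~\ref{thm:sc-gen} to reduce the almost-sure identification of $\lim_N Q_N$ to a question about the derivative of $q$ at the origin, and then to match $q'(0)$ with $\Sc(\Q|\P)$ via a standard convex-analysis argument applied to the finite-$n$ approximants.

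First, I would observe that the extra regularity collapses the subdifferential at the origin. Since $q$ exists as a finite limit on an open interval $I$ containing~$0$, one has $\overline{q} \equiv q$ throughout~$I$, and $q$ is convex on~$I$ as a pointwise limit of convex functions. The assumed differentiability of~$q$ at~$0$ then forces $D_-\overline{q}(0) = D_+\overline{q}(0) = q'(0)$, so Theorem~\ref{thm:sc-gen} directly yields
\[
    \lim_{N\to\infty} Q_N(y,x) = q'(0)
\]
for $(\P\otimes\Q)$-almost every~$(x,y)$.

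It then remains to identify $q'(0)$ with $\Sc(\Q|\P)$, which will simultaneously give existence of the cross-entropy limit. For each $n\in\nn$, set
\[
    f_n(\alpha) := \frac{1}{n} \ln \sum_{a \in \supp \Q_n} \P[a]^{-\alpha}\Q[a], \qquad \alpha \in \rr.
\]
The inclusion $\supp \Q \subseteq \supp \P$ passes to $\supp \Q_n \subseteq \supp \P_n$, so each summand is finite and strictly positive and $f_n$ is smooth in~$\alpha$. Each $f_n$ is convex by H\"older's inequality, and by hypothesis $f_n \to q$ pointwise on~$I$. Invoking the classical fact that pointwise convergence of convex functions on an open set, together with differentiability of the limit at an interior point, forces convergence of the derivatives at that point, one obtains $f_n'(0) \to q'(0)$. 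A direct computation\,---\,differentiating under the finite sum\,---\,yields
\[
    f_n'(0) = -\frac{1}{n}\sum_{a \in \supp \Q_n} \Q[a] \ln \P[a],
\]
so $\Sc(\Q|\P)$ exists and equals $q'(0)$. Combining with the previous paragraph closes the proof.

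The only substantive step is the invocation of the convex-analysis lemma on convergence of derivatives of pointwise-convergent convex functions; the rest is bookkeeping. The $\supp \Q \subseteq \supp \P$ hypothesis from Theorem~\ref{thm:sc-gen} is precisely what makes $f_n$ finite (and hence smooth) near the origin, so that $f_n'(0)$ is meaningful and the derivatives-under-the-sum computation is trivially justified.
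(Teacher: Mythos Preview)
Your proof is correct, and takes a somewhat different route from the paper's own argument to identify $q'(0)$ with~$\Sc(\Q|\P)$. The paper first invokes Lemma~\ref{lem:cross-SMB-gap} (a gapped subadditive ergodic theorem, using~\ref{it:UD} for~$\P$) to obtain the $\Q$-almost sure convergence $-\tfrac{1}{n}\ln\P[y_1^n]\to h_\P(y)$ with $\int h_\P\,\dd\Q=\Sc(\Q|\P)$, and then a standard large-deviation argument \`a la Ellis to obtain $-\tfrac{1}{n}\ln\P[y_1^n]\to q'(0)$ almost surely; comparing the two limits forces $q'(0)=\Sc(\Q|\P)$. You bypass both the subadditive ergodic theorem and the large-deviation step by working directly at the level of the convex functionals~$f_n$: pointwise convergence of convex functions to a limit differentiable at an interior point forces convergence of the derivatives there, and $f_n'(0)$ is exactly the $n$-th cross-entropy average. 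Your route is more elementary and self-contained (no ergodic or large-deviation input beyond Theorem~\ref{thm:sc-gen} itself), while the paper's route yields the slightly stronger intermediate statement that the pointwise Shannon--McMillan--Breiman--type limit $h_\P(y)$ is $\Q$-almost surely \emph{constant} and equal to~$q'(0)$, not merely that its integral is.
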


\begin{proof}
    Since $\P$ satisfies~\ref{it:UD}, we have 
    \[ 
        \lim_{n\to\infty}\frac{-\ln \P[y_1^n]}{n} = h_{\P}(y)
    \]
    for $\Q$-almost every~$y$, where $h_{\P}$ is a nonnegative, measurable, shift-invariant function integrating to~$\Sc(\Q|\P)$ with respect to~$\Q$; see Lemma~\ref{lem:cross-SMB-gap} below. 
    On the other hand,
    if $q$ exists as a finite limit on an open interval containing the origin and is differentiable at the origin, then a standard large-deviation argument~\cite[\S{II.6}]{Ell} yields
    \[ 
        \lim_{n\to\infty}\frac{-\ln \P[y_1^n]}{n} = q'(0)
    \]
    for $\Q$-almost every~$y$.
    Hence, we may use $D_-\bar{q}(0) = D_+\bar{q}(0) = \Sc(\Q|\P)$ in~\eqref{eq:subdiff-bounds} to deduce~\eqref{eq:when-diff}.
\end{proof}

In some sense, the gap to bridge from~\eqref{eq:subdiff-bounds} to~\eqref{eq:when-diff} outside of this regular situation is not a problem regarding the behavior of the mZM estimator itself as much as it is a problem regarding a certain rigidity of the convergence in an analogue of the Shannon--McMillan--Breiman theorem for cross entropies; this will be discussed in Section~\ref{sec:open-prob}.

\section{Proof of Theorem~\ref{thm:sc-gen}}
\label{sec:proof}

Theorem~\ref{thm:sc-gen} and Corollary~\ref{cor:when-diff} will follow from Theorem~\ref{thm:abstract-modified-parsing} and Propositions~\ref{prop:diff-minus} and~\ref{prop:diff-plus} below. 
{Before we proceed with their proofs, let us provide some intuition and} introduce some notation. If $\overline{w}^{(i,N)}(y,x)$ (sometimes simply $\overline{w}^{(i,N)}$) is a word coming from the mZM parsing of~$y_1^N$ with respect to~$x_1^N$, we write $\ell_{i,N}$ for its length. Furthermore, we denote by $\underline{w}^{(i,N)}(y,x)$ (sometimes simply $\underline{w}^{(i,N)}$) the string obtained by considering $\overline{w}^{(i,N)}$ without its last letter, and by $\underline \ell_{i,N}$ its length. Sometimes, we suppress the $N$-dependence from the notation and simply write $\overline w^{(i)}$, $\ell_i$, $\underline w^{(i)}$ and $\underline \ell_i$ for readability. 
Our treatment of the estimator requires the following 4 ingredients with high enough probability:
\begin{enumerate}
    \item[I.] \emph{The number $c_N$ of words in the mZM parsing can be bounded above in terms of the probabilities $\P[\overline{w}^{(i,N)}]$.} 

    To see why this is plausible, think of $W_{\ell}(y,x)$ as a geometric random variable with parameter $\P[y_1^\ell]$ as in e.g.~\cite{GS97,CR23}: $W_{\ell}(y,x)$ is very likely to be less than $N-\ell+1$ as long as $\P[y_1^\ell]$ stays above $N^{-1+\epsilon}$. This suggests that, most likely, $\ln \P[\overline{w}^{(1,N)}] < -(1-\epsilon)\ln N$. If there is enough decoupling that the same is true not only for $i=1$ but for all~$i$ up to $c_N-1$, then summing over~$i$ suggests that, most likely, 
    $$
        (1-\epsilon)[c_N-1] \ln N 
            \lesssim -\sum_{i=1}^{c_N-1} \ln \P[\overline{w}^{(i,N)}].
    $$

    \item[II.] \emph{The number $c_N$ of words in the mZM parsing can be bounded below in terms of the probabilities $\P[\underline{w}^{(i,N)}]$.}

    To see why this is plausible, consider the same geometric approximation as for Ingredient~I: $W_{\ell}(y,x)$ is very likely to be more than $N$ once $\P[y_1^\ell]$ drops below $N^{-1-\epsilon}$. This suggests that, most likely, $\ln \P[\underline{w}^{(1,N)}] > -(1+\epsilon)\ln N$. If there is enough decoupling that the same is true not only for $i=1$ but for all~$i$ up to $c_N$, then summing over~$i$ suggests that, most likely, 
    $$
        (1+\epsilon)[c_N-1] \ln N 
            \gtrsim -\sum_{i=1}^{c_N} \ln \P[\underline{w}^{(i,N)}].
    $$
    
    \item[III.] \emph{Minus the sum of the logarithms of the probabilities $\P[\overline{w}^{(i,N)}]$ can be bounded above in terms of the cross entropy or the cross-entropic pressure.}

    To see why this is plausible, recall that under suitable decoupling conditions, there is a large-deviation principle for $(-\ln \P[y_1^\ell])_{\ell=1}^\infty$ with $y$ sampled from~$\Q$~\cite{CR23}. Therefore, it should be very unlikely for any of the $-\tfrac{1}{\ell_{i,N}} \ln \P[\overline{w}^{(i,N)}]$ to fall outside the zero-set of the rate function as the words get long. But this zero-set is expected to be the subdifferential of the pressure at~$0$.
    
    \item[IV.] \emph{Minus the sum of the logarithms of the probabilities $\P[\underline{w}^{(i,N)}]$ can be bounded below in terms of the cross entropy or the cross-entropic pressure.}

    The heuristics here mimic those for Ingredient~III and only the details of the proof technique will change.
\end{enumerate}

\subsection{Preliminaries}
\label{ssec:proof-prelim}

Note that the above heuristic arguments rely on the words in the mZM parsing being long enough when~$N$ is large enough. It turns out that, for some technical arguments, we will also need these length not to grow too fast as $N$ increases. This is why the following \emph{a priori} bounds on the length of the words in the mZM parsing will play an important role in establishing every ingredient. For the rest of this section, we will denote this \emph{a priori} upper bound by
\[
    \ell_{+,N}\coloneqq \frac{8\ln N}{-\overline{q}(-1)}.
\]
The assumptions that~$\P$ and~$\Q$ are shift invariant and that $\supp\Q\subseteq\supp\P$ are enforced throughout, but other assumptions will be explicitly imposed when needed.
\begin{lemma}
\label{lem:a-priori-bounds-a}
    Suppose that $\P$ satisfies~\ref{it:SLD} and that~\ref{it:ND} holds. Then, for all $\lambda > 0$, we have, for $N$ large enough, that  with probability at least $1-2N^{-2}$, 
    \begin{equation}
    \label{eq:a-priori-bounds-a}
       \lambda \leq |\underline w^{(i,N)}|<|\overline{w}^{(i,N)}| \leq \ell_{+,N}
    \end{equation}
    for $i < c_N$, and the corresponding upper bound for~$\overline{w}^{(c_N,N)}$.
\end{lemma}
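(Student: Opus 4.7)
The plan is to control two separate bad events, each with probability at most $N^{-\nu}$, and then apply a union bound; the strict inequality $|\underline w^{(i,N)}|<|\overline w^{(i,N)}|$ is automatic from the definitions. In both cases, I reduce to a union bound over the deterministic set of candidate starting positions $s\in\{1,\ldots,N\}$ in $y$, since the (random) positions $L_{i-1}+1$ always form a subset.

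For the upper bound $|\overline{w}^{(i,N)}|\leq\ell_{+,N}$, the key input is~\ref{it:ND}. If the bound fails for some $i$, there must exist $s,r\in\{1,\ldots,N\}$ such that $y_s^{s+\lceil\ell_{+,N}\rceil-1}=x_r^{r+\lceil\ell_{+,N}\rceil-1}$. Shift-invariance of $\P$ and $\Q$ together with their independence makes the $(\P\otimes\Q)$-probability of any single such pairwise coincidence equal to $\sum_{a\in\supp\Q_\ell}\P[a]\Q[a]$, which by~\ref{it:ND} and the definition of $\overline{q}(-1)$ is bounded by $\exp(\overline{q}(-1)\ell/2)$ for all large $\ell$. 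Substituting the prescribed value $\ell_{+,N}=2(\nu+2)\ln N/(-\overline{q}(-1))$ gives a per-pair bound of $N^{-(\nu+2)}$; a union bound over the $O(N^2)$ pairs then yields the desired $N^{-\nu}$.

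For the lower bound $|\underline{w}^{(i,N)}|\geq\ell_{-,N}$ with $i<c_N$, the combination of~\ref{it:SE} with the choice $\ell_{-,N}=(\ln N/(2|\gamma_-|))^{1/\beta}$ guarantees $\P[y_s^{s+\lceil\ell_{-,N}\rceil-1}]\geq N^{-1/2}$ uniformly in $y$ and $s$, using $\supp\Q\subseteq\supp\P$. Failure of the bound forces the waiting time $W_{\lceil\ell_{-,N}\rceil}(T^{s-1}y,x)$ to exceed $N-\lceil\ell_{-,N}\rceil+1$ for some $s$. Following the geometric-random-variable heuristic spelled out earlier, I would invoke a quantitative waiting-time tail bound asserting that, whenever $\P[z_1^\ell]\geq N^{-1/2}$, the probability that $W_\ell(z,x)>N$ is super-polynomially small (of order $\exp(-cN^{1/2}/\mathrm{polylog}(N))$), and a union bound over the at most $N$ candidate positions $s$ then gives $N^{-\nu}$.

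The main obstacle is the quantitative waiting-time tail bound used in the lower bound. The natural route is to tile $x_1^N$ with $M\asymp N/(\ell_{-,N}+\tau_{\ell_{-,N}})$ disjoint length-$\ell_{-,N}$ slots separated by buffers of length $\tau_{\ell_{-,N}}$, bound the probability that none of these slots hosts $y_s^{s+\lceil\ell_{-,N}\rceil-1}$ via an iterated decoupling argument exploiting~\ref{it:SLD}, and balance the main term $M\cdot N^{-1/2}\asymp N^{1/2}/\ell_{-,N}$ against the accumulated decoupling errors. This is the place where the poly-logarithmic growth of $\ell_{-,N}$ and the $o(n)$ character of $k_n$ and $\tau_n$ must be leveraged carefully. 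Closely related waiting-time estimates are developed in~\cite{GS97,Ko98,CR23}, which I would adapt rather than rederive from scratch.
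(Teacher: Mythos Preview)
Your proposal is correct and follows essentially the same route as the paper: the upper bound via~\ref{it:ND} and a union bound over coincidence positions, and the lower bound via~\ref{it:SE} combined with the Kontoyiannis-type waiting-time tail estimate under~\ref{it:SLD} (tiling $x_1^N$ into $\lfloor N/(\ell_-+\tau_{\ell_-})\rfloor$ slots). The only cosmetic difference is that for the lower bound the paper takes the final union bound over the at most $(\#\cA)^{\ell_-}$ words $a\in\supp\Q_{\ell_-}$ rather than over the $N$ starting positions in~$y$; both counts are polynomial in~$N$ and are swamped by the stretched-exponential tail $\exp(-cN^{1/2}/\mathrm{polylog}(N))$, so either works.
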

\begin{proof}
    We start with the upper bound, relying on~\ref{it:ND}.
    By a mere union bound, we have 
    \begin{align*}
        (\P\otimes\Q) \left\{ (x,y) : W_{\ell}(y, x) < N\right\}
            &\leq \sum_{a \in \supp \P_\ell } \Q[a] \cdot N \P[a] \\
            &= N \Exp{\ell \cdot \frac{1}{\ell} \ln \sum_{a \in \supp \P_\ell } \Q[a]\P[a]} \\ 
            &\leq N \Exp{\ell \frac{{{\overline{q}(-1)}}}{2}}
    \end{align*}
    for all $\ell$ large enough. In particular, with ${\ell_+} = \frac{8}{-{{\overline{q}(-1)}}} \ln N$, we find 
    \begin{align*}
        (\P\otimes\Q) \left\{ (x,y) : W_{\ell_+ }(y, x) < N\right\}
            &\leq N \Exp{\frac{8}{-{{\overline{q}(-1)}}} \ln N \frac{{{\overline{q}(-1)}}}{2}} \\
            &= N^{-3}
    \end{align*}
    for all~$N$ large enough. Using a union bound and shift invariance, we find the following: with probability at least~$1 - N^{-2}$, no substring of $y$ of length $\ell_+ $ appears in~$x_1^N$.

    Let $\lambda \in \nn$ be arbitrary.
    By the Kontoyiannis argument for measures satisfying~\ref{it:SLD}, whenever we have $a \in \supp \Q_{\lambda}$,\footnote{Recall that we are working under the assumption that $\supp \Q_{\lambda} \subseteq \supp \P_{\lambda}$.} we have 
    \begin{align*}
        \P\{x : W_{\lambda}(a,x) > N - 2\lambda\} &\leq (1 - \Exp{-k_{\lambda}}\P[a])^{\left\lfloor\frac{N - 2\lambda-1}{\lambda+\tau}\right\rfloor}\\
            &\leq\exp\left(-  \frac{N\min_{a\in\supp\Q_\lambda}\P[a]}{\Exp{k_{\lambda}}3\lambda}\right);
    \end{align*}
    see e.g.~\cite[\S{3.1}]{CR23}. 
    Using a union bound over $a \in \supp \Q_\lambda$, we find the following for $N$ large enough: with probability at least $1 - N^{-2}$, all words of length $\lambda$ that are at all likely to appear in~$y_1^N$ do appear in~$x_1^N$.

    Therefore, with probability at least $1-2N^{-2}$, all words in the mZM parsing of~$y_1^N$ have lengths between $\lambda$ and $\ell_+$.
\end{proof}

\begin{remark}
\label{rem:cN-is-oN}
    The bounds~\eqref{eq:a-priori-bounds-a} imply bounds on $c_N(y|x)$. In particular, appealing to the Borel--Cantelli lemma, we have that, $(\P\otimes\Q)$-almost surely, $c_N(y|x) = o(N)$.
\end{remark}

\begin{remark}
    The decay of the probability as $N^{-2}$ is not optimal: a decay as $N^{-\nu}$ starting from some possibly larger $N$ can be obtained by replacing $8$ with $2(2+\nu)$ in the definition of~$\ell_{+,N}$. However, we are ultimately interested in almost sure statements and $N^{-2}$ suffices for the summability required for applications of the Borel--Cantelli lemma. 
\end{remark}

\begin{remark}
\label{rem:uniform-if-SE}
The lower bound can be taken to be growing with $N$ if the $\P$-probabilities of cylinders decay {slowly enough}. For example, one can replace $\lambda$ with some $\ell_{-,N}$ that grows as a power of $\ln N$ under the following mild condition:
\begin{description}
    \item[{SE}\namedlabel{it:SE}{SE}] A measure $\P$ will be said to decay \emph{slowly enough} if there exists $\beta \geq 1$ and $\gamma_- < 0$ such that 
    \begin{equation}
        \min_{a \in \supp \P_n} \P[a] \geq \Exp{\gamma_- n^{\beta}}
    \end{equation}
    for all~$n\in\nn$. 
\end{description}
{To be more precise, the power of the logarithm in $\ell_{-,N}$ can be chosen to be $\beta^{-1}$.}
\end{remark}

{
\begin{remark}
\label{rem:SLD-through-Kon}
    Note that~\ref{it:SLD} is only used through an upper bound on the probability of a waiting time exceeding a certain value, for which we have referred to~\cite[\S{3.1}]{CR23}. This section is based on ideas from~\cite[\S{2}]{Ko98} and~\cite[\S{3}]{CDEJR23w}. In Kontoyiannis original work, a $\psi$-mixing property was used instead of~\ref{it:SLD}, and the former implies the latter. The same remark applies to Theorem~\ref{thm:abstract-modified-parsing}
    .
\end{remark}

\begin{lemma}
\label{lem:cross-SMB-gap}
    If $\P$ satisfies~\ref{it:UD}, then~$\Sc(\Q|\P)$ exists and
    \[ 
        \lim_{n\to\infty}\frac{-\ln \P[y_1^n]}{n} = h_{\P}(y)
    \]
    for $\Q$-almost every~$y$, where $h_{\P}$ is a nonnegative, measurable, shift-invariant function integrating to~$\Sc(\Q|\P)$ with respect to~$\Q$. If, in addition, $\Q$ is ergodic, then
    \[ 
        \lim_{n\to\infty}\frac{-\ln \P[y_1^n]}{n} = \Sc(\Q|\P)
    \]
    for $\Q$-almost every~$y$. 
\end{lemma}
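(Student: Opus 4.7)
The plan is to establish the two claims as a version of the Moy--Perez (cross-entropy Shannon--McMillan--Breiman) theorem adapted to the decoupling regime of~\ref{it:UD}; see~\cite{Mo61,Ki74,Or85} for earlier incarnations. For the existence of $\Sc(\Q|\P)$, I would set $H_n := -\sum_{a \in \cA^n} \Q[a]\ln \P[a]$, which is well-defined and finite for each $n$ because $\supp \Q \subseteq \supp \P$ and $\cA$ is finite. Applying~\ref{it:UD} to each word $y_1^{n+\tau_n+m} = a \xi b$ with $a = y_1^n$, $\xi = y_{n+1}^{n+\tau_n}$ and $b = y_{n+\tau_n+1}^{n+\tau_n+m}$, summing with weights $\Q[y_1^{n+\tau_n+m}]$, and using shift invariance of $\Q$ to identify the last marginal with $H_m$, one obtains the almost-superadditive bound
\[
H_{n+\tau_n+m} \geq H_n + H_m - k_n.
\]
Since $\tau_n, k_n = o(n)$, a Fekete-type argument tailored to almost-superadditive sequences with sublinear corrections and sublinear offsets gives convergence of $H_n/n$ in $[0,\infty]$, which is the existence of $\Sc(\Q|\P)$.

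For the pointwise statement, I would consider the nonnegative functions $f_n(y) := -\ln \P[y_1^n]$ and note that the same application of~\ref{it:UD} gives the pointwise inequality
\[
f_{n+\tau_n+m}(y) \geq f_n(y) + f_m(T^{n+\tau_n} y) - k_n
\]
on the full-$\Q$-measure set $\supp \Q$. This is an almost-superadditive cocycle on $(\cA^\nn, T, \Q)$ with $o(n)$ slack in both the additive correction and the shift offset. Applying an extension of Kingman's (super)additive ergodic theorem that tolerates such corrections\,---\,of the kind developed for decoupled measures in~\cite{CJPS19,CR23}\,---\,I would conclude that $f_n/n$ converges $\Q$-almost surely to a shift-invariant function $h_\P \in [0,\infty]$ with $\int h_\P\,\diff\Q = \Sc(\Q|\P)$, the identification of the mean coming either from the accompanying $L^1$ statement or from combining Fatou's lemma with the supremum representation yielded by the Fekete step. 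When $\Q$ is ergodic, shift invariance forces $h_\P$ to be $\Q$-almost surely constant, hence equal to $\Sc(\Q|\P)$.

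The main technical obstacle is the adaptation of Kingman's theorem to tolerate both the $o(n)$ additive correction $k_n$ and the $o(n)$ shift offset $\tau_n$: the classical statement would apply directly only to strict (sub/super)-additive cocycles, whereas the slack built into~\ref{it:UD} forces one to invoke the variant developed in the statistical-mechanics literature on decoupled measures, and to carefully verify that the almost-sure limit can still be identified with the limit of means. A secondary check, in the degenerate case $\Sc(\Q|\P) = +\infty$, will be to argue by truncation that $f_n/n \to +\infty$ $\Q$-almost surely.
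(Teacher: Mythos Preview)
Your proposal is correct and follows essentially the same route as the paper: define $f_n(y)=\pm\ln\P[y_1^n]$, use~\ref{it:UD} to obtain a gapped (sub/super)additive cocycle inequality with $o(n)$ slack in both the additive constant and the shift offset, and invoke a gapped version of Kingman's theorem to get the almost-sure limit, its shift invariance, and the identification of its mean with $\Sc(\Q|\P)$. The only discrepancy is bibliographic: the precise gapped Kingman theorem you need is stated and proved in~\cite{Ra23} (which the paper cites directly), not in~\cite{CJPS19,CR23}.
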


\begin{proof}
    In view of~\ref{it:UD}, we can apply a gapped version of Kingman's subadditive ergodic theorem~\cite[\S{III}]{Ra23} to the sequence $(f_n)_{n=1}^{\infty}$ of measurable functions on~$(\supp \P, \mathcal{F}, T, \Q)$ defined by $f_n(x) \coloneqq  \ln \P[x_1^n]$.
    This provides all the assertions of the lemma.
\end{proof}

\begin{lemma}
\label{lem:q-exists}
    If $\P$ and $\Q$ both satisfy~\ref{it:UD}, then $q(\alpha)$ exists in $[-\infty,0]$ for all~$\alpha \leq 0$. Moreover, we have the variational representation
    \begin{equation}
    \label{eq:alpha-var-p}
    q(\alpha) = \sup_{\mu \textnormal{ invar}} \int \alpha h_\P - [h_\Q - h_\mu] \dd\mu
    \end{equation}
    for all~$\alpha \leq 0$, where the supremum is taken over all shift-invariant probability measures $\mu$ on~$\cA^\nn$.
\end{lemma}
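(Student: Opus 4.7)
I would first show gapped subadditivity of $u_n := \ln Z_n(\alpha)$, where $Z_n(\alpha) := \sum_{a \in \supp \Q_n}\P[a]^{-\alpha}\Q[a]$. For $\alpha \leq 0$, the exponent $-\alpha$ is nonnegative, so raising the~\ref{it:UD} upper bound for~$\P$ to that power gives $\P[a\xi b]^{-\alpha} \leq \Exp{-\alpha k_n}\P[a]^{-\alpha}\P[b]^{-\alpha}$; combining with the~\ref{it:UD} upper bound for~$\Q$ and summing over the decompositions $c = a\xi b$ of each $c \in \supp \Q_{n+\tau_n+m}$ yields
\[
    Z_{n+\tau_n+m}(\alpha) \leq \Exp{(1-\alpha)k_n}(\#\cA)^{\tau_n}\,Z_n(\alpha)\,Z_m(\alpha),
\]
whence $u_{n+\tau_n+m} \leq u_n + u_m + o(n)$. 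A gapped Fekete lemma as in~\cite[\S{III}]{Ra23} then delivers the existence of $q(\alpha) := \lim n^{-1}u_n$, and the obvious bound $Z_n(\alpha) \leq \sum_a \Q[a] = 1$ places the limit in $[-\infty, 0]$.

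\textbf{Upper bound $\sup_\mu \leq q(\alpha)$.} For any shift-invariant~$\mu$, Jensen's inequality applied to the probability $\mu$ on $\cA^n$ gives
\[
    \sum_a \mu[a]\ln\frac{\P[a]^{-\alpha}\Q[a]}{\mu[a]}\leq \ln Z_n(\alpha).
\]
The left-hand side equals $\alpha H_n(\mu,\P) - H_n(\mu,\Q) + H_n(\mu)$, where $H_n(\mu,\cdot)$ is the $n$-th cross-entropy and $H_n(\mu)$ the $n$-th Shannon entropy. Dividing by~$n$ and passing to the limit $n\to\infty$, these three terms converge respectively to $\int h_\P\,\dd\mu$, $\int h_\Q\,\dd\mu$, and $h_\mu$ by Lemma~\ref{lem:cross-SMB-gap} applied to~$\P$ and to~$\Q$ (both satisfying~\ref{it:UD}) and by the standard entropy limit; convergence under integration is afforded by the gapped-superadditivity of these sequences in~$n$, itself a consequence of~\ref{it:UD}. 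This yields $\int(\alpha h_\P - [h_\Q - h_\mu])\,\dd\mu \leq q(\alpha)$ for every invariant~$\mu$, hence $\sup_\mu \leq q(\alpha)$.

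\textbf{Reverse direction and main obstacle.} For the reverse inequality I would introduce the finite-volume Gibbs probabilities $\nu_n[a] := Z_n(\alpha)^{-1}\P[a]^{-\alpha}\Q[a]$ on $\supp\Q_n$; a direct computation gives the finite-volume identity $\ln Z_n(\alpha) = \alpha H_n(\nu_n,\P) - H_n(\nu_n, \Q) + H_n(\nu_n)$, which realizes the variational functional exactly at volume~$n$. The plan is to lift $\nu_n$ to a shift-invariant $\bar\nu_n$ on~$\cA^\nn$ by cylindrical averaging over a periodic extension (in the spirit of Misiurewicz's proof of the variational principle for topological pressure), extract a weak-$*$ accumulation point~$\mu^*$, and verify the appropriate upper semi-continuity of the functionals $\mu \mapsto h_\mu$, $\mu \mapsto \int h_\P\,\dd\mu$, and $\mu \mapsto \int h_\Q\,\dd\mu$ along the averaging sequence in order to conclude $q(\alpha) \leq \int(\alpha h_\P - [h_\Q - h_{\mu^*}])\,\dd\mu^*$. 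The main technical obstacle sits here: the ``potentials'' $-\ln \P[\cdot]$ and $-\ln \Q[\cdot]$ are only gapped-subadditive sequences rather than Birkhoff averages of continuous observables, so the classical variational-principle arguments must be adapted to exploit~\ref{it:UD} of both~$\P$ and~$\Q$\,---\,which is precisely why both measures are assumed to satisfy~\ref{it:UD} in the hypothesis.
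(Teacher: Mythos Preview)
Your proposal is correct and follows essentially the same route as the paper: gapped subadditivity of $\ln Z_n(\alpha)$ via~\ref{it:UD} for both measures and a gapped Fekete lemma for existence; the finite-volume variational principle (your Jensen step) for the inequality $\sup_\mu \leq q(\alpha)$; and periodization/averaging of the finite-volume maximizers $\nu_n$ with passage to a weak-$*$ limit for the reverse inequality, with the same technical obstacle you flag (the ``potentials'' being only gapped-subadditive rather than Birkhoff sums). The paper credits this last step to~\cite[\S{4.1}]{BJPP18} and an adaptation of~\cite[\S{2}]{CFH08}; note also that the gapped Fekete lemma is in~\cite[\S{II}]{Ra23}, not~\S{III}.
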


\begin{proof}
    If $\alpha \leq 0$, then $p \in \rr_+ \mapsto p^{-\alpha}$ is nondecreasing, so~UD implies
    \begin{align*}
        q_{n+\tau+m}(\alpha)
            &= \ln \sum_{a \in \supp \P_{n+\tau+m}} \P[a]^{-\alpha} \Q[a] \\
            &\leq \ln \sum_{a \in \supp \P_{n}} \sum_{\xi \in \supp \P_{\tau}} \sum_{b\in\supp \P_{m}} (\Exp{k_n}\P[a]\P[b])^{-\alpha} \Exp{k_n}\Q[a] \Q[b]\\
            &= q_n(\alpha) + q_m(\alpha) + (1-\alpha) k_n + \ln (\#\supp \P_{\tau}).
    \end{align*}
     Because $k_n = o(n)$ and $\ln|\supp \P_{\tau}| \leq \tau \ln (\#\cA)$, 
     the limit exists in $[-\infty,0]$ and equals
     \[ 
        q(\alpha) = \inf_{n\in\nn} \frac{q_n(\alpha) + (1-\alpha)k_n + \tau \ln(\#\cA)}{n + \tau}
     \]
     by a gapped version of Fekete's lemma; see~\cite[\S{II}]{Ra23}. 

    We now turn to the variational representation. Note that the variational principle for measures on finite sets\footnote{ In the thermodynamic formalism, the ``potential'' is the function on~$\cA^n$ defined by $a \mapsto \ln \P[a]$, the ``reference measure'' on~$\cA^n$ is the $n$-th marginal of~$\Q$, and the ``inverse temperature'' is~$\alpha$.} yields
    \begin{align}
    \label{eq:finite-n-vp}
        q_n(\alpha) = \sup_{\mu_n}\left(-\alpha \int \ln \P_n \dd\mu_n - \int [-\ln \Q_n + \ln \mu_n] \dd\mu_n  \right),
    \end{align}
    {where the supremum is taken over all probability measure $\mu_n$ on $\cA^n$.} Dividing by~$n$ and taking $n\to\infty$ for (the marginals of) every invariant measure~$\mu$ and then taking a supremum, we find the following variational inequality:
    \begin{align}
    \label{eq:vp-ineq}
        q(\alpha) \geq \sup_{\mu \text{ invar}} \left(\int \alpha h_\P - [h_\Q - h_\mu] \dd\mu  \right).
    \end{align}
    We have used Lemma~\ref{lem:cross-SMB-gap} and the Shannon--McMillan--Breiman theorem.
    The fact that this is actually an equality can be proved by showing that maximizers for~\eqref{eq:finite-n-vp}, whose form is explicit, admit\,---\,once properly extended and periodized\,---\,a subsequence converging weakly as $n\to\infty$ to a maximizer for~\eqref{eq:vp-ineq}, in a way that parallels~\cite[\S{4.1}]{BJPP18}; also see e.g.~\cite[\S{III.4}]{Sim}. At the technical level, the proof requires an adaptation of Lemma~2.3 in~\cite[\S{2}]{CFH08} on subadditive sequences of functions to $(\ln \P_n)_{n=1}^\infty$, which instead satisfies the gapped subadditivity condition considered in~\cite{Ra23}.
\end{proof}

\subsection{Ingredients I and II}

Within our framework, the situation for Ingredients I and II is rather satisfying, as exhibited by the following theorem.

\begin{theorem}
\label{thm:abstract-modified-parsing}
    Suppose that $\P$ satisfies~\ref{it:SLD}, 
    that $\Q$ satisfies~\ref{it:UD}, and that~\ref{it:ND} holds. Then, for all $\epsilon \in (0,1)$ and $\lambda > 0$, we have the following properties for $N$ large enough:
    \begin{enumerate}
        \item[i.] with probability at least $1-3N^{-2}$,
        $$(1-\epsilon)[c_N-1] \ln N 
                \leq -\sum_{i=1}^{c_N-1} \ln \P[\overline{w}^{(i,N)}];
        $$
        
        \item[ii.] with probability at least $1-3N^{-2}$, 
        $$
            -\sum_{i=1}^{c_N} \ln \P[\underline{w}^{(i,N)}]
            \leq (1+\epsilon)c_N \ln N.
        $$
    \end{enumerate}
\end{theorem}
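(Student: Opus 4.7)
The plan is to leverage the a priori length bounds from Lemma~\ref{lem:a-priori-bounds-a}. Restricting to the event where every mZM word has length in~$[\ell_{-,N}, \ell_{+,N}]$, which has probability at least $1-2N^{-\nu}$, guarantees $c_N \geq (N-\ell_{+,N})/\ell_{+,N} = \Omega(N/\ln N)$, a bound needed in both parts.

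For part~(i), the plan is to show that, with probability at least $1-N^{-\nu}$, every sufficiently probable prefix of $y$ occurs in $x_1^N$. The Kontoyiannis-style waiting-time estimate from the proof of Lemma~\ref{lem:a-priori-bounds-a} (relying on~\ref{it:SLD}) gives, for any $a \in \supp \P_\ell$ with $\P[a] \geq N^{-(1-\epsilon)}$ and $\ell \in [\ell_{-,N}, \ell_{+,N}]$, that $\P\{W_\ell(a,x) > N - \ell + 1\} \leq \Exp{-c N^{\epsilon}/\ln N}$, which is sub-polynomial in~$N$. A union bound over the $O(N \ln N)$ pairs $(p,\ell)$ and the random prefixes $(T^{p-1}y)_1^\ell$ (sampled via $\Q$) costs only polynomially. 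On this event, the defining property of the mZM parsing---that $\overline{w}^{(i,N)}$ does \emph{not} appear in $x_1^N$ for $i<c_N$---forces $\P[\overline{w}^{(i,N)}] \leq N^{-(1-\epsilon)}$, and summing over~$i$ gives~(i).

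For part~(ii), the plan is to split the indices into good and bad according to whether $-\ln \P[\underline{w}^{(i,N)}] \leq (1+\delta) \ln N$, for an auxiliary parameter $\delta \in (0,\epsilon]$. The good indices contribute at most $(1+\delta)\, c_N \ln N$ to the sum. For a bad index $i$, the prefix of $T^{L_{i-1}}y$ of length $\ell^\ast(T^{L_{i-1}}y, N) := \min\{\ell : \P[(T^{L_{i-1}}y)_1^\ell] < N^{-(1+\delta)}\}$ must already appear in~$x_1^N$; shift invariance of~$\Q$ combined with the trivial union bound $\P\{a \text{ occurs in } x_1^N\} \leq N\P[a]$ then yields $\mathbb{E}[|\mathrm{bad}|] \leq N \cdot N \cdot N^{-(1+\delta)} = N^{1-\delta}$. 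Using the deterministic fallback $-\ln \P[\underline{w}^{(i,N)}] \leq |\gamma_-| \ell_{+,N}^{\beta} = O((\ln N)^{\beta})$ coming from~\ref{it:SE} on atypical indices, a Markov-type estimate for $|\mathrm{bad}|$ makes the bad contribution at most $\epsilon\, c_N \ln N$ with the required probability, so the total sum is at most $(1+2\epsilon)\, c_N \ln N$.

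The hard part, as the heuristic preceding the theorem emphasizes, is to extend the individual pointwise control from $i=1$ to all $i\leq c_N$ simultaneously. The combination of the good/bad split with the deterministic fallback from~\ref{it:SE} on atypical indices is the device that replaces a full joint concentration over the $c_N$ random prefixes, which a naive union bound cannot deliver; tracking the dependence of $\delta$ on $\epsilon$ and $\nu$ carefully is what makes the argument close.
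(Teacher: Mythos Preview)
Your part~(i) is correct and essentially identical to the paper's argument: both show via the Kontoyiannis waiting-time bound and a union bound over $O(N\ell_{+,N})$ position--length pairs that, with probability $\geq 1-N^{-\nu}$, every prefix of a shift of~$y$ with $\P$-measure $\geq N^{-(1-\epsilon)}$ and length in $[\ell_{-,N},\ell_{+,N}]$ appears in~$x_1^N$; on that event each $\overline{w}^{(i,N)}$ (which does \emph{not} appear) must be improbable, and summing gives~(i).

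Part~(ii) has a genuine gap at the ``Markov-type estimate''. Your first-moment bound $\mathbb{E}[|\mathrm{bad}|] \leq N^{1-\delta}$ is fine, but Markov's inequality then yields only
\[
(\P\otimes\Q)\Bigl\{|\mathrm{bad}| > \tfrac{C\epsilon N}{(\ln N)^{\beta}}\Bigr\} \leq \frac{N^{-\delta}(\ln N)^{\beta}}{C\epsilon},
\]
and this is $\leq N^{-\nu}$ only when (roughly) $\delta > \nu$. You constrain $\delta \leq \epsilon < 1$ so that the good contribution $(1+\delta)c_N\ln N$ stays within the budget, while $\nu > 0$ is \emph{arbitrary}; for $\nu \geq 1$ the argument fails outright. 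No tuning of~$\delta$ resolves this tension: the constraint from the good part and the constraint from the tail bound pull in opposite directions, and a first-moment method cannot deliver $N^{-\nu}$ for all~$\nu$.

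The paper's proof of~(ii) is different in kind and does use the hypothesis your sketch never invokes, namely~\ref{it:UD} for~$\Q$. It groups the indices into families $\mathcal{I}_s$ of size~$N^r$ whose members are separated by at least $\ln N$ symbols in~$y$; this spacing lets~\ref{it:UD} for~$\Q$ dominate the joint law of $\bigl(-\ln\P[\underline{w}^{(i)}]\bigr)_{i\in\mathcal{I}_s}$, restricted to the events $\{W_{\underline\ell_i}(\,\cdot\,,x) < N\}$, by a \emph{product} measure, after which an exponential Chebyshev bound (Lemma~\ref{lem:Cheby-like}) together with the single-index tail $(\P\otimes\Q)\{-\ln\P[y_1^{l}] \geq t,\ W_l(y,x) < N\} \leq N\Exp{-t}$ gives stretched-exponential decay in~$N^r$. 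That decoupling-then-Chernoff step is precisely what upgrades the polynomial decay of your Markov bound to the arbitrary polynomial rate required by the statement.
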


\begin{proof}
    Let $\epsilon \in (0,1)$ be arbitrary. 
    \begin{enumerate}
        \item[i.] We begin with a simple observation: if $[c_N(y|x) - 1] \ln N > -\frac{1}{1-\epsilon} \sum_{i=1}^{c_N - 1} \ln \P[\overline{w}^{(i)}]$ and the bounds in~\eqref{eq:a-priori-bounds-a} hold, then there must exist $j \in\{0,1,\dotsc,N-1\}$ and $\ell \in\{1,2,\dotsc,\ell_+\}$ such that $W_{\ell}(T^jy, x) \geq N - \ell$ and $\P[y_{j+1}^{j+\ell}] > N^{-1+\epsilon}$. Importantly, the implication about the waiting time comes from the definition of the mZM parsing. Indeed, if a subword $\overline w^{(i)}$ appears as a parsed word in $y_1^N$, then by definition it cannot appear as a subword inside $x_1^N$.
        Therefore, by Lemma~\ref{lem:a-priori-bounds-a} and a union bound, we have
        \begin{align*}
            &(\P\otimes\Q) \left\{(x,y) : [c_N(y|x) - 1] \ln N > -\frac{1}{1-\epsilon} \sum_{i=1}^{c_N - 1} \ln \P[\overline{w}^{(i)}] \right\} 
            \\ &\leq 
            2N^{-2} + \sum_{j=0}^{N-1} \sum_{\ell = 1}^{\ell_+} (\P\otimes\Q) \left\{(x,y) : W_{\ell}(T^jy, x) \geq N - \ell_+ \text{ and } \P[y_{j+1}^{j+\ell}] > N^{-1+\epsilon}\right\}.
        \end{align*}
        Now, recalling the Kontoyiannis argument for measures satisfying~\ref{it:SLD}~\cite[\S{3.1}]{CR23} and the fact that $\tau_\ell = o(\ell)$, we have
        \begin{align*}
            &(\P\otimes\Q) \left\{(x,y) : W_{\ell}(y, x) \geq N - \ell_+ \text{ and } \P[y_{1}^{\ell}] > N^{-1+\epsilon}\right\} 
            \\
            &\hspace{.5\textwidth} 
            \leq (1-\Exp{-k_\ell}N^{-1+\epsilon})^{\left\lfloor\frac{N-\ell_+-1}{\ell + \tau_{\ell}}\right\rfloor} 
            \\
            &\hspace{.5\textwidth} 
            \leq \exp\left(-\frac{N^{\epsilon}}{3\ell\Exp{k_{\ell}}}\right).
        \end{align*} 
        Hence, recalling that $k_\ell = o(\ell)$  and that we are only considering $\ell \leq \ell_+ = O(\ln N)$, by shift invariance,
        \begin{align*}
            &(\P\otimes\Q) \left\{(x,y) : [c_N(y|x) - 1] \ln N > -\frac{1}{1-\epsilon} \sum_{i=1}^{c_N - 1} \ln \P[\overline{w}^{(i)}] \right\}  
                \\
                &\hspace{.5\textwidth} 
                \leq 2N^{-2} + \Exp{-N^{\frac{\epsilon}{2}} } 
                \\
                &\hspace{.5\textwidth} 
                \leq 3N^{-2},
        \end{align*}
        for $N$ large enough, as desired.

    \item[ii.]    
       Let $r \in (0,1)$ and $\lambda \geq \tau$ be arbitrary. Let $c^+$ be a multiple of $2 \lceil N^r \rceil$ and split $\{1,2,\dotsc,c^+\}$ into many pairs of interlaced families of $\lceil N^r \rceil$ indices: $(\mathcal{I}_s)_{s=1}^{s_{c^+}}$ are defined by
        \begin{align*}
        \mathcal{I}_1 = \{1, 3, 5, \dotsc, 1+ 2(\lceil N^r\rceil-1) \}
        \qquad\text{and}\qquad
        \mathcal{I}_2 = \{2, 4, 6, \dotsc, 2+ 2(\lceil N^r\rceil-1) \}, 
        \end{align*}
        and then
        \begin{align*}
            \mathcal{I}_{2k+1} = \mathcal{I}_{1} + 2k\lceil N^r\rceil 
            \qquad\text{and}\qquad
            \mathcal{I}_{2k+2} = \mathcal{I}_{2} + 2k\lceil N^r\rceil,
        \end{align*}    
        for as long as $2(k+1)\lceil N^r \rceil\leq c^+$.
            Consider natural numbers $(L_i)_{i=1}^{c^+}$  satisfying $$\lambda \leq L_{i} - L_{i-1} \leq \ell_{+,N}$$ for each $i=1,2,\dotsc,c^+$.\footnote{Here and below, $L_0 \equiv 0$ by convention.} In particular,
            $ L_{i'-1} - L_{i} \geq \tau $
            whenever $i < i'$ belong to $\mathcal{I}_s$ for the same~$s$.
            Since 
            \begin{align*}
            (\P\otimes\Q)\left\{-\ln \P[y_{L+1}^{L'}] \geq t \text{ and } W_{L'-L}(T^{L}y,x) < N  \right\}
                &\leq \sum_{\substack{a \in \supp \P_{L'-L} \\ \P[a] \leq \Exp{-t}}} \Q[a] \cdot N \P[a] \\
                &\leq N \Exp{-t}
            \end{align*}
            and $\Q$ satisfies~\ref{it:UD}, we can appeal to the Chebyshev-like bound detailed in Lemma~\ref{lem:Cheby-like} with exponential moments of order $\varkappa = (1+\frac \epsilon 2 )^{-\frac 12}$ to deduce that, for each~$s$, we have
            \begin{equation}
            \label{eq:bound-Ch-waiting}
            \begin{split}
            &(\P\otimes\Q) \left\{ \sum_{i \in \mathcal{I}_s} -\ln \P[y_{L_{i-1}+1}^{L_{i}}] \geq  t
             \text{ and } W_{L_i - L_{i-1}}(T^{L_{i-1}}y,x) < N \text{ for each } i \in \mathcal{I}_s \right\}
             \\
             &\qquad \leq
                \left( \prod_{i\in \mathcal{I}_s}\Exp{k'_{L_{i}-L_{i-1}}} \right) \Exp{-(1+\frac \epsilon 2)^{-\frac 12} t} \left(\frac{N}{1-(1+\frac\epsilon 2)^{-\frac 12 }}\right)^{\lceil N^r \rceil} 
            \\
            &\qquad \leq 
                \exp\left(\lceil N^r \rceil \ell_{+,N} \left(\sup_{l \geq \lambda} \frac{k'_l}{l}\right) -\left( 1+\frac\epsilon 2 \right)^{-\frac 12} t +  \lceil N^r \rceil \ln N +    C_\epsilon N^r  \right),
            \end{split}
            \end{equation}
            where $k'_l \coloneqq k_l + \tau \ln |\cA|$.
            We have used the fact that the different $L_i-L_{i-1}$ for $i \in \mathcal{I}_s$ sum to at most $\lceil N^r \rceil \ell_{+,N}$.
            Below, we will use the above estimate with $t = t_N \coloneqq (1+\frac\epsilon 2) \lceil N^r\rceil \ln N$ and for $\lambda$ and $N$ large enough. 

            Now, set, as a measurable function of~$x$ and~$y$,
            \[ 
                c_N^+ \coloneqq \min\{c^+ > c_N : c^+ \text{ is a multiple of } 2 \lceil N^r \rceil\}.
            \]
            and $(\underline{w}_+^{(i,N)})_{i=1}^{c_N^+}$ the words in the mZM parsing of~$y$ with respect to $x_1^N$ that is not stopped until $c_N^+$ steps.
            Note that, under the bounds~\eqref{eq:a-priori-bounds-a} from Lemma~\ref{lem:a-priori-bounds-a}, we have
            $c_N^+ \leq c_N + 4 N^r = c_N + o(c_N)$, so
            \begin{align*}
                &(\P\otimes\Q)
                \left\{-\sum_{i=1}^{c_N} \ln \P[\underline{w}^{(i,N)}]
                > (1+\epsilon)c_N \ln N\right\}
                \\
                &\qquad 
                \leq 2N^{-2} + (\P\otimes\Q)
                \left\{-\sum_{i=1}^{c_N^+} \ln \P[\underline{w}_+^{(i,N)}]
                > \left( 1+\frac\epsilon 2 \right)c_N^+ \ln N \text{ and~\eqref{eq:a-priori-bounds-a} holds }\right\} 
                \\
                &\qquad 
                \leq 2N^{-2} + (\P\otimes\Q)\Bigg(\bigcup_{c^+}\bigcup_{s}
                \Bigg\{-\sum_{i \in \mathcal{I}_s} \ln \P[\underline{w}_+^{(i,N)}]
                > \left( 1+\frac\epsilon 2 \right) \lceil N^r \rceil\ln N, 
                    \\ & \hspace{.65\textwidth}
                    \text{\eqref{eq:a-priori-bounds-a} holds and }c_N^+ = c^+ \Bigg\}\Bigg)
            \end{align*}
            for $N$ large enough. But since the words $\underline{w}_+^{(i,N)}$ do appear in~$x_1^N$, we may use the bound~\eqref{eq:bound-Ch-waiting} to deduce that 
            \begin{align*}
            &(\P\otimes\Q)\left(\bigcup_{c^+}\bigcup_{s}
                \left\{-\sum_{i \in \mathcal{I}_s} \ln \P[\underline{w}_+^{(i,N)}]
                > \left( 1+\frac\epsilon 2 \right) \lceil N^r\rceil \ln N \text{ and~\eqref{eq:a-priori-bounds-a} holds }\right\}\right)
                \\
                &
                \leq 
                \operatorname{Comb}(N;\lambda) \exp\left(-\left(\left( 1+\frac\epsilon 2 \right)^{\frac 12} - 1\right) \lceil N^r \rceil\ln N +  \lceil N^r \rceil\ell_{+,N} \left(\sup_{l \geq \lambda} \frac{k'_l}{l}\right) + C_\epsilon N^r\right)
            \end{align*}
            for all~$N$ large enough, where $\operatorname{Comb}(N;\lambda)$ is a combinatorial factor that takes into account all possibilities for $c^+$ and $s$, and then choices of $L_{i-1}$  and $L_{i}-L_{i-1}$ for $i \in \mathcal{I}_s$. Since crude estimates give
            \begin{align*}
                \operatorname{Comb}(N;\lambda) 
                    &\leq (\lambda^{-1}N + 4 N^r) \times \frac{\lambda^{-1}N + 4 N^r }{N^r } \times N \ell_{+,N}^{\lceil N^r \rceil-1} \times \ell_{+,N}^{\lceil N^r \rceil} \\
                    &\leq \exp\left( N^r O(\ln \ln N)\right),
            \end{align*}
            the result follows. \qedhere
    \end{enumerate}
\end{proof}
\subsection{Ingredients III and IV}
\label{ssec:iii-iv}

The following propositions establish the missing ingredients for the proof of Theorem~\ref{thm:sc-gen}.

\begin{proposition}
\label{prop:diff-minus}
    Suppose that $\P$ satisfies~\ref{it:SLD}, that $\Q$ satisfies~\ref{it:UD}, 
    and that~\ref{it:ND} holds. Then, almost surely,
    \[
    \liminf_{N\to\infty} \frac{1}{N-c_N}\sum_{i =1}^{c_N} -\ln \P[\underline{w}^{(i,N)}] \geq  D_-\bar{q}(0).
    \]
\end{proposition}

\begin{proof}
    We start with the case $D_-\bar{q}(0) < \infty$ and will conclude with a description of how to adapt the proof to the infinite case. Let Let $\lambda > \tau$ and $\epsilon > 0$ be arbitrary, and then fix a natural number $c \leq \lambda^{-1}N$ as well as $c-1$ natural numbers, $(L_i)_{i=1}^{c-1}$, with $\lambda \leq L_{i} - L_{i-1}$ for each $i=1,2,\dotsc,c-1$ and $L_{i}-L_{i-1} \leq \ell_{+}$ for $i=1,2,\dotsc,c$.\footnote{Here and below, $L_0 \equiv 0$ and $L_c \equiv N$ by convention.} Note that if 
    \begin{equation*}
        \sum_{i=1}^{c} -\ln \P[y_{L_{i-1}+1}^{L_{i}-1}] \leq (N-c) (D_-\bar{q}(0) - \epsilon) ,
    \end{equation*}
    then
    \begin{equation*}
        \sum_{i=1}^{c-1} -\ln \P[y_{L_{i-1}+1}^{L_{i}-\max\{1,\tau\}}] \leq (N-c) (D_-\bar{q}(0) - \epsilon)
    \end{equation*}
    as well. {We will omit keeping track of the maximum with 1 for readability.}
    Then, we can use the assumption that~$\Q$ satisfies UD to invoke the Chebyshev-like bound in Lemma~\ref{lem:Cheby-like} and deduce that
    \begin{align}
        &\Q\left\{ y : \sum_{i=1}^{c-1} -\ln \P[y_{L_{i-1}+1}^{L_{i}-\tau}] \leq (N-c) (D_-\bar{q}(0) - \epsilon) \right\} 
            \notag 
            \\ & \qquad
            \leq \exp\left(-\alpha (N-c) (D_-\bar{q}(0) - \epsilon) \right) \prod_{i=1}^{c-1} \Exp{k'_{L_i-L_{i-1}-\tau}+q_{L_i-L_{i-1}-\tau}(\alpha)}
            \label{eq:Ch-D_-}
    \end{align}
    for all $\alpha < 0$ with small enough absolute value, where $k'_l \coloneqq  k_l + \tau \ln |\cA|$. Note that, provided that $\lambda$ is large enough, we will have 
    \[ 
        \frac{N-c}{N-c\tau-\ell_+} \left(D_-\bar{q}(0) - \frac{\epsilon}{2}\right) < D_-\bar{q}(0)
    \]
    for all~$N$ large enough. Then, by the definition and convexity of~$\bar{q}$ and the fact that $k'_l = o(l)$, we may then take $\alpha < 0$ with $|\alpha|$ so small
    \[ 
         \sup_{l \geq \lambda-\tau}  \left(\frac{k'_{l}}{l} + \frac{q_{l}(\alpha)}{l} \right) < \frac{N-c}{N-c\tau-\ell_+} \alpha (D_-\bar{q}(0) - \epsilon)  
    \]
    for all $\lambda$ and $N$ large enough%
        \footnote{First, $\lambda$ is chosen as a function of~$\epsilon$, $\tau$ and $\bar{q}$ only. Then, 
        $\alpha$ is picked as a function of~$\epsilon$ and $\bar{q}$, but independently of~$N$ and $\lambda$. Therefore, we can indeed make those large enough, as a function of $\epsilon$, $\tau$, $\bar{q}$, $\lambda$, $\alpha$, $(q_\ell(\alpha))_{\ell=1}^\infty$, and $(k_\ell)_{\ell=1}^\infty$ only in this last step.}%
    ; see Figure~\ref{fig:subdiff}.
        
 \begin{figure}
        \centering
        \includegraphics{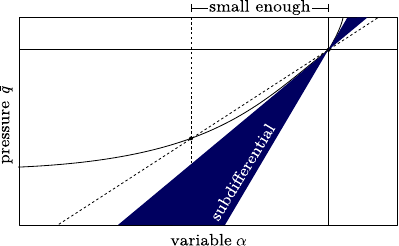}
        \caption{The dashed line has a slope that is slightly less steep than the least steep slope $D_-\bar{q}(0)$ in the subdifferential of the convex function~$\bar{q}$ at the origin, so we can find a small region to the left of the origin where the graph of~$\bar{q}$ lies below it.}
        \label{fig:subdiff}
    \end{figure}

    Since $\sum_{i=1}^{c-1} L_i - L_{i-1} - \tau \geq {N-c\tau-\ell_+}$, this implies that there exists $\delta' > 0$ such that, under the same constraints, 
    \begin{align*}
        \sum_{i=1}^{c-1} k'_{ L_i - L_{i-1} - \tau} +q_{ L_i - L_{i-1} - \tau}(\alpha) 
            &< {N-c\tau-\ell_+}
            \left( \frac{N-c}{N-c\tau-\ell_+} \alpha (D_-\bar{q}(0) - \epsilon)  - \delta'\right) \\
            &= \alpha (N-c) (D_-\bar{q}(0) - \epsilon) - \delta' (N-c\tau-\ell_+).
    \end{align*}
   Therefore, the bound~\eqref{eq:Ch-D_-} becomes
    \begin{align*}
        \Q\left\{ y : \sum_{i=1}^{c-1} -\ln \P[y_{L_{i-1}+1}^{L_{i}-\tau}] \leq (N-c) (D_-\bar{q}(0) - \epsilon) \right\} 
            &\leq \Exp{-\delta N}
    \end{align*}
    for some $\delta > 0$ and all $N$ large enough.
    All in all, using again Lemma~\ref{lem:a-priori-bounds-a} and a union bound, we have 
    \begin{align*}
        &(\P\otimes\Q)\left\{(x,y) : \sum_{i =1}^{c_N} -\ln \P[\underline{w}^{(i,N)}] \leq (N-c) (D_-\bar{q}(0) - \epsilon)\right\} \\
        &\qquad \leq 2N^{-2} + \sum_{c=1}^{\lambda^{-1}N} \sum_{\substack{(L_i)_{i} \in \{1,\dotsc,N-1\}^{c-1} \\ L_{i}-L_{i-1} \geq \lambda}}\Exp{-\delta N} \\
        &\qquad \leq 2N^{-2} + \lambda^{-1}N \times \binom{N}{\lambda^{-1} N} \times \Exp{-\delta N} \\
        &\qquad \leq 2N^{-2} + \exp\left(\ln(\lambda^{-1} N) + \frac{1 + \ln \lambda}{\lambda}N - \delta N \right).
    \end{align*}
    Since the right-hand side is summable in~$N$, the result in the case $D_-\bar{q}(0)<\infty$ follows from the Borel--Cantelli lemma and then taking $\epsilon \to 0$ along some discrete sequence.
    Finally, in the case $D_-\bar{q}(0) = \infty$, we simply replace ``$D_-\bar{q}(0) - \epsilon$'' with ``$\epsilon^{-1}$'' and again take $\epsilon \to 0$ along some discrete sequence.
\end{proof}

\begin{remark}\label{rmk:corr}
    As $c_N = o(N)$ by Remark~\ref{rem:cN-is-oN}, we could have proven the result with $N$ instead of $N- c_N$ in the denominator. However, since $\ell_{+,N} \ll c_N \ll N$, we see that, when $\tau = 0$ or $\tau = 1$, the factor carried throughout the proof to pass from $N-c \max\{\tau,1\}-\ell_{+,N}$ to the desired $N-c$ is significantly closer to~$1$ than the factor that would be needed to pass to~$N$. Hence, the denominator $N-c_N$ is in some sense more natural for this proof. This is consistent with our numerical investigation of the convergence in Section~\ref{ssec:hmm}.
\end{remark}

\begin{proposition}
\label{prop:diff-plus}
    Suppose that $\P$ satisfies~\ref{it:SLD}, that $\Q$ satisfies~\ref{it:UD},
    and that~\ref{it:ND} holds. Then, almost surely,
    \[
    \limsup_{N\to\infty} \frac{1}{N}\sum_{i =1}^{c_N} -\ln \P[\overline{w}^{(i,N)}] \leq  D_+\bar{q}(0).
    \]
\end{proposition}
\begin{proof}
    Note that if $D_+\bar{q}(0) = \infty$, then the proposed inequality is trivially true. Hence, we will assume that $D_+\bar{q}(0) < \infty$ and let $\epsilon > 0$ be arbitrary.
    Now, fix a natural number $c \leq \lambda^{-1}N$ as well as $c$ natural numbers, $(L_i)_{i=1}^{c}$, with $\lambda \leq L_{i} - L_{i-1} \leq \ell_{+}$ and $N \leq L_c \leq N + \ell_{+}$.%
        \footnote{Again $L_0 \equiv 0$ by convention.}  
    Now, let $\mathcal{I}_1$ [resp.~$\mathcal{I}_2$] be the odd [resp.\ even] indices $i \leq c$.
    Note that if 
    \begin{equation}
    \label{eq:contr-ub-Dplus}
        \sum_{i =1}^c -\ln \P[y_{L_{i-1}+1}^{L_{i}}] \geq N (D_+\bar{q}(0) + \epsilon),
    \end{equation}
    and $\nu \in (0,1)$, then 
    \begin{equation}
    \label{eq:contr-ub-Dplus-s}
        \sum_{i \in \mathcal{I}_s} -\ln \P[y_{L_{i-1}+1}^{L_{i}}] \geq (1-\nu) \sum_{i \in \mathcal{I}_s} (L_i-L_{i-1}) (D_+\bar{q}(0) + \epsilon) + \nu \frac{N}{2} (D_+\bar{q}(0) + \epsilon).
    \end{equation}
    for either $s=1$ or $s=2$ (possibly both).
    Provided that $\lambda \geq \max\{2,\tau\}$, we can use UD and the Chebyshev-like bound in Lemma~\ref{lem:Cheby-like} to deduce that 
    \begin{align*}
        &\Q\left\{y : \sum_{i \in \mathcal{I}_s} -\ln \P[y_{L_{i-1}+1}^{L_{i}}] \geq (1-\nu) \sum_{i \in \mathcal{I}_s} (L_i-L_{i-1}) (D_+\bar{q}(0) + \epsilon) + \nu \frac{N}{2} (D_+\bar{q}(0) + \epsilon) \right\}
        \\
        &\qquad \leq \exp\left(-\alpha(1-\nu) \sum_{i \in \mathcal{I}_s} (L_i-L_{i-1}) (D_+\bar{q}(0) + \epsilon) - \alpha\nu \frac{N}{2} (D_+\bar{q}(0) + \epsilon)\right)
            \\ & \hspace{.5\textwidth} \prod_{i \in \mathcal{I}_s} \Exp{k'_{L_i-L_{i-1}}(\alpha)+q_{L_i-L_{i-1}}(\alpha)}
    \end{align*}
    for all $\alpha > 0$ small enough. 
    From now on, we will require
    \begin{equation}
    \label{eq:nu-cond-1}
     \nu < 1-\frac{D_+\bar{q}(0) + \tfrac 12 \epsilon}{D_+\bar{q}(0) + \epsilon}
    \end{equation}
    so that 
    \begin{align*}
        &\Q\left\{y : \sum_{i \in \mathcal{I}_s} -\ln \P[y_{L_{i-1}+1}^{L_{i}}] \geq (1-\nu) \sum_{i \in \mathcal{I}_s} (L_i-L_{i-1}) (D_+\bar{q}(0) + \epsilon) + \nu \frac{N}{2} (D_+\bar{q}(0) + \epsilon) \right\}
        \\
        &\qquad \leq \exp\left(-\alpha \sum_{i \in \mathcal{I}_s} (L_i-L_{i-1}) (D_+\bar{q}(0) + \tfrac 12 \epsilon) - \alpha\nu \frac{N}{2} (D_+\bar{q}(0) + \epsilon)\right) 
            \\ & \hspace{.5\textwidth} 
            \prod_{i \in \mathcal{I}_s^*} \Exp{k'_{L_i-L_{i-1}}(\alpha)} \prod_{i \in \mathcal{I}_s} \Exp{q_{L_i-L_{i-1}}(\alpha)}
    \end{align*}
    By the definition and convexity of~$\bar{q}$ and the fact that $k'_l = o(l)$, we may then take $\alpha > 0$ so small that 
    \[ 
        \alpha \left( D_+\bar{q}(0) + \frac \epsilon 2 \right) \geq \sup_{l \geq \lambda} \left(\frac{q_{l}(\alpha)}{l} + \frac{k'_{l}}{l}\right).
    \]
    for all $\lambda$ large enough\footnote{The choice of $\alpha$ is first made as function of~$\epsilon$ and $\bar{q}$, but independently of~$\lambda$, and we then make the latter large enough as a function of $\epsilon$, $\bar{q}$, $\alpha$, $(q_\ell(\alpha))_{\ell=1}^\infty$ and $(k_\ell)_{\ell=1}^\infty$ only.}.
    Therefore,
    \begin{align*}
        &\Q\left\{y : \sum_{i \in \mathcal{I}_s} -\ln \P[y_{L_{i-1}+1}^{L_{i}}] \geq (1-\nu) \sum_{i \in \mathcal{I}_s} (L_i-L_{i-1}) (D_+\bar{q}(0) + \epsilon) + \nu \frac{N}{2} (D_+\bar{q}(0) + \epsilon) \right\}
        \\
            &\hspace{.5\textwidth} 
            \leq \exp\left(-\frac{\alpha\nu N}{2} (D_+\bar{q}(0) + \epsilon)\right). 
    \end{align*}
    Denote by $(\overline w^{(i,N)}_+)_{i=1}^{c_N}$ the words in a mZM parsing of $y$ with respect to $x_1^N$ that is not stopped until the bona fide end of the step~$c_N$. In other words, only the last word may change and it is now allowed to extend past $y_1^N$ in order to avoid having match in~$x_1^N$.
    All in all, using again Lemma~\ref{lem:a-priori-bounds-a} and a union bound, we have
    \begin{align*}
        &(\P\otimes\Q)\left\{\sum_{i =1}^{c_N} -\ln \P[\overline{w}^{(i,N)}] \geq N (D_+\bar{q}(0) + \epsilon)\right\} \\
        &\qquad
        \leq 2N^{-2} + (\P\otimes\Q)\Bigg( \bigcup_{c=1}^{\lambda^{-1}N} \Bigg\{ \sum_{i =1}^{c_N} -\ln \P[\overline{w}_+^{(i,N)}] \geq N (D_+\bar{q}(0) + \epsilon) 
            \\ &\hspace{.5\textwidth}
            \text{ and~\eqref{eq:a-priori-bounds-a} holds and } c_N = c \Bigg\} \Bigg)\\
        &\qquad \leq 2N^{-2} + \sum_{c=1}^{\lambda^{-1}N} \sum_{s=1}^2 \sum_{\substack{(L_i)_{i} \in \{1,\dotsc,N-1\}^{c} \\ L_{i}-L_{i-1} \geq \lambda}}\exp\left(-\frac{\alpha\nu N}{2} (D_+\bar{q}(0) + \epsilon)\right) \\
        &\qquad \leq 2N^{-2} + \exp\left(\ln(\lambda^{-1} N) + \ln 2 + \frac{1 + \ln \lambda}{\lambda}N -\frac{\alpha\nu N}{2} (D_+\bar{q}(0) + \epsilon)\right).
    \end{align*}
    This will be summable provided that 
    \begin{equation}
    \nu > \frac{1+\ln \lambda}{\lambda} \frac{2}{\alpha (D_+\bar{q}(0)+\epsilon)},
    \end{equation}
    which is consistent with our earlier requirement~\eqref{eq:nu-cond-1} because $\lambda$ can be taken to be as large as desired without changing the values of $\epsilon$ and $\alpha$. Hence, the result follows from the Borel--Cantelli lemma and then taking $\epsilon \to 0$ along some discrete sequence.
\end{proof}

As mentioned at the beginning of the section, Theorem~\ref{thm:abstract-modified-parsing}, Proposition~\ref{prop:diff-minus} and Proposition~\ref{prop:diff-plus} imply Theorem~\ref{thm:sc-gen}, and Corollary~\ref{cor:when-diff} follows.

\section{A Property of the left derivative}
\label{ssec:left-deriv}

 Before we change point of view on the problem in Section~\ref{sec:open-prob}, we present an important technical ingredient in the form of a theorem which will be useful in the important family of hidden-Markov measures and quantum measurements in Section~\ref{ssec:hmm}.
\begin{theorem}
\label{thm:left-der}
    Suppose that~$\P$ satisfies~\ref{it:UD}. If, in addition, $\Q$ is ergodic and satisfies~\ref{it:UD} with 
    $k_n = O(1)$, then 
    \[ 
        D_- q(0) = \Sc(\Q|\P).
    \]
\end{theorem}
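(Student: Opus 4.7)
The upper bound $D_-q(0) \leq \Sc(\Q|\P)$ is immediate from Lemma~\ref{lem:q-exists}: evaluating the variational formula at $\mu = \Q$ (using Lemma~\ref{lem:cross-SMB-gap} together with the ordinary Shannon--McMillan--Breiman theorem for the ergodic measure $\Q$ to identify $\int h_\P \dd\Q = \Sc(\Q|\P)$ and $\int h_\Q \dd\Q = h(\Q)$) yields $q(\alpha) \geq \alpha \Sc(\Q|\P)$ for every $\alpha \leq 0$. Dividing by $\alpha < 0$ (which flips the inequality) and sending $\alpha \uparrow 0$ gives the claim.

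For the matching lower bound $D_-q(0) \geq \Sc(\Q|\P)$, the plan is to probe the variational principle more carefully. Writing $F(\mu) := \int h_\P \dd\mu$ and $G(\mu) := h(\mu) - \int h_\Q \dd\mu$, so that $G \leq 0$, $G(\Q) = 0$ and $F(\Q) = \Sc(\Q|\P)$, Lemma~\ref{lem:q-exists} reads $q(\alpha) = \sup_\mu\bigl[\alpha F(\mu) + G(\mu)\bigr]$ for $\alpha \leq 0$. For each $\alpha < 0$, let $\mu_\alpha$ be a (near-)maximizer, whose existence follows from weak-$*$ compactness of the space of shift-invariant probability measures together with the appropriate upper semi-continuity of the objective. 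Comparison with $\mu = \Q$ in the variational formula yields
\[
0 \geq G(\mu_\alpha) \geq \alpha\bigl[\Sc(\Q|\P) - F(\mu_\alpha)\bigr],
\]
which forces both $F(\mu_\alpha) \leq \Sc(\Q|\P)$ and $-G(\mu_\alpha) \leq |\alpha|\bigl(\Sc(\Q|\P) - F(\mu_\alpha)\bigr)$. In particular, $G(\mu_\alpha) \to 0$ as $\alpha \uparrow 0$. Rewriting $q(\alpha)/\alpha = F(\mu_\alpha) + G(\mu_\alpha)/\alpha$ and noting that $G(\mu_\alpha)/\alpha \in [0,\, \Sc(\Q|\P) - F(\mu_\alpha)]$, the reverse bound reduces to the convergence $F(\mu_\alpha) \to \Sc(\Q|\P)$ as $\alpha \uparrow 0$.

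To establish this convergence, extract a weak-$*$ accumulation point $\mu_\star$ of $(\mu_{\alpha_k})$ along some $\alpha_k \uparrow 0$. Upper semi-continuity of $G$ (equivalently, lower semi-continuity of the specific relative entropy of $\mu$ with respect to $\Q$) combined with $G(\mu_{\alpha_k}) \to 0$ forces $G(\mu_\star) = 0$. Under the hypothesis that $\Q$ is ergodic and satisfies~\ref{it:UD} with $\tau_n, k_n = O(1)$, the measure $\Q$ lies within the uniqueness regime of the thermodynamic formalism and is expected to be the unique shift-invariant zero of $-G$; this forces $\mu_\star = \Q$. Combining with the trapping $F(\mu_\alpha) \leq F(\Q)$ and the appropriate semi-continuity of $F$ at $\Q$ (which is itself controlled by~\ref{it:UD} on $\P$ and Lemma~\ref{lem:cross-SMB-gap}) upgrades this to $F(\mu_\alpha) \to \Sc(\Q|\P)$, and hence
\[
D_-q(0) = \lim_{\alpha \uparrow 0} \frac{q(\alpha)}{\alpha} = \Sc(\Q|\P).
\]
The principal technical obstacle is the uniqueness of $\mu_\star$: while plausible from the strong decoupling hypotheses on $\Q$, a careful verification will likely require a block-decomposition argument in the spirit of those of Section~\ref{sec:proof}, or equivalently an exponential concentration estimate $\Q\{-\tfrac{1}{n}\ln \P[y_1^n] \leq \Sc(\Q|\P) - \epsilon\} \leq \Exp{-c(\epsilon)n}$, which upon splitting $\mathbb{E}_\Q[\P[y_1^n]^{-\alpha}]$ on this event and its complement leads directly to $q(\alpha) \leq \alpha(\Sc(\Q|\P) - \epsilon)$ for $|\alpha|$ small and $\alpha < 0$.
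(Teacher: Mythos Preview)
Your overall architecture matches the paper's: the upper bound by testing the variational formula at $\mu=\Q$, and the lower bound by extracting weak-$*$ limits of (near-)maximizers $\mu_\alpha$ and identifying the limit with $\Q$. The part you flag as ``the principal technical obstacle''\,---\,uniqueness of the shift-invariant measure with $G(\mu)=0$\,---\,is indeed the heart of the matter, and you do not actually resolve it. Saying that $\Q$ ``lies within the uniqueness regime of the thermodynamic formalism'' is not a proof here: $\Q$ is only assumed to satisfy~\ref{it:UD} with bounded constants, not to be Bowen--Gibbs or to have a continuous potential, so none of the classical uniqueness theorems apply off the shelf.

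The paper closes this gap with a short, concrete argument that is different from either of your two suggestions. For any $\mu$ with $G(\mu)=0$, set $F_n := \int[\ln\Q_n - \ln\mu_n]\,\dd\mu$. By~\ref{it:UD} for $\Q$ and subadditivity of entropy, $(F_n)$ is gapped subadditive, and the gapped Fekete/Kingman theory yields
\[
0 = -G(\mu) = \int[h_\Q - h_\mu]\,\dd\mu = \inf_{n\in\nn}\frac{F_n + k_n + \tau_n\ln(\#\cA)}{n+\tau_n}.
\]
Hence $0 \leq -F_n \leq k_n + \tau_n\ln(\#\cA)$ for every $n$. Since $-F_n$ is precisely the (non-specific) relative entropy of the $n$-th marginal of $\mu$ with respect to that of $\Q$, the hypothesis $k_n + \tau_n\ln(\#\cA) = O(1)$ gives uniformly bounded marginal relative entropies, which forces $\mu \ll \Q$; ergodicity of $\Q$ then gives $\mu = \Q$. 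This is the step you are missing, and it is exactly where the bounded-constants hypothesis on~$\Q$ is used. Your proposed alternatives (a block argument from Section~\ref{sec:proof}, or a direct exponential concentration estimate for $-\tfrac1n\ln\P[y_1^n]$ under~$\Q$) are not what the paper does, and the latter would itself require a proof under the present hypotheses. A minor additional point: your manipulations tacitly assume $\Sc(\Q|\P)<\infty$; the paper disposes of the case $D_-q(0)=\infty$ separately before running the limiting argument.
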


\begin{proof}[Proof.]
    Recall that the limit $q$ exists for all~$\alpha \leq 0$ and has the variational expression~\eqref{eq:alpha-var-p} by Lemma~\ref{lem:q-exists}.  
    By gapped versions of Fekete's lemma and Kingman's theorem, one can write the maps $\mu \mapsto \alpha \int h_\P \dd\mu$ and $\mu \mapsto -\int h_\Q \dd\mu$ as infima of families of continuous maps on the space of shift-invariant Borel measures on~$\cA^\nn$ equipped with the topology of weak convergence. Hence, these two maps are upper semicontinuous. It is also possible to show that they are affine. Both these properties are well known for the Kolmogorov--Sinai entropy map $\mu \mapsto \int h_\mu \dd\mu$ on shift spaces. In particular, the set of maximizers is nonempty and convex.
    
    {We follow the same strategy as for the analogous result in~\cite[\S{4.1}]{BJPP18}.} Using elementary properties of convex functions and this variational expression, one can show that 
    \begin{align*}
        D_-q(0) &= \sup_{\alpha < 0} \frac{q(\alpha)}{\alpha} \\
        &\leq \int h_\P \dd\Q.
    \end{align*} 
    Note that the right-hand side is the desired specific cross entropy by Lemma~\ref{lem:cross-SMB-gap}. {If $D_-q(0)=\infty$, then the theorem is already established. So, we will assume otherwise for the remainder of the proof.}
    
    Consider now a sequence of maximizers $\mu(\alpha_k)$ for~\eqref{eq:alpha-var-p} with $\alpha = \alpha_k$ such that $\alpha_k$ increases to 0 as $k\to\infty$, avoiding the points of discontinuity of~$q'$. Up to extracting a subsequence, we may assume that $\mu(\alpha_k)$ converges weakly to some shift-invariant~$\mu(0)$. Using upper semicontinuity of the aforementioned maps and the continuity of~$q$ on~$(-\infty,0]$, one can show that this $\mu(0)$ is a maximizer for~\eqref{eq:alpha-var-p} with $\alpha = 0$. Again using upper semicontinuity one can then show that
    \begin{align*}
        D_-q(0) &\geq \lim_{k\to\infty} \int h_\P \dd\mu(\alpha_k) \\
            &\geq \inf_{\mu \text{ 0-eq}} \int h_\P \dd\mu,
    \end{align*}
    where the infimum is taken over all~$\mu$ in the set of maximizers for~\eqref{eq:alpha-var-p} with $\alpha = 0$.\footnote{In the thermodynamic formalism, we call these maximizers ``$\alpha$-equilibrium measures'', hence the ``0-eq'' in the notation.}
    Hence, the theorem will be proved if we can show that $\Q$ is the \emph{unique} maximizer for~\eqref{eq:alpha-var-p} with $\alpha = 0$. {This can again be done following the proof of the analogous result in~\cite[\S{4.1}]{BJPP18}, which is in turn along the lines of classical arguments of Ruelle summarized e.g.\ in~\cite[\S{III.8}]{Sim}. We will still provide the outline of the proof for completeness.}
    
    Let $\mu$ be any such maximizer, i.e.\ a shift-invariant measure such that
    \begin{equation}
    \label{eq:0-eq-q}
        0 = q(0) = -\int [h_\Q - h_\mu] \dd\mu.
    \end{equation}
    By~\ref{it:UD} for $\Q$, 
    \[
        F_n \coloneqq  \int [\ln \Q_n - \ln \mu_n] \dd\mu 
    \]
    defines a gapped, subadditive sequence\footnote{ Note that $F_n \in [-\infty,0]$ by an elementary convexity argument and that, a priori, if $F_n = -\infty$ for some $n$ (due to $\supp \mu_n \not\subseteq \supp \Q_n$), then $F_{n+m} = -\infty$ for all $m$. We will soon see that, a posteriori, this is ruled out.} {in the sense that 
    \begin{align*}
        F_{n+\tau+m} &= \int [\ln \Q_{n+\tau+m} - \ln \mu_{n+\tau+m}] \dd\mu \\
        &= \sum_{a \in \cA^n}\sum_{\xi\in\cA^{\tau}} \sum_{b \in \cA^m} \ln \Q[a\xi b] \ \mu[a\xi b] + \sum_{a \in \cA^n}\sum_{\xi\in\cA^{\tau}} \sum_{b \in \cA^m} - \ln \mu[a\xi b] \ \mu[a\xi b] \\
        &\leq k_n + \sum_{a \in \cA^n} \ln \Q[a]\ \mu[a] +  \sum_{b \in \cA^m} \ln\Q[b] \ \mu[b] \\
        &\qquad\qquad {} +  \sum_{a \in \cA^n} - \ln \mu[a] \ \mu[a] + \sum_{\xi\in\cA^{\tau}} - \ln \mu[\xi] \ \mu[\xi] + \sum_{b \in \cA^m} - \ln \mu[b] \ \mu[b] \\
        &\leq k_n + F_n + F_m + \tau \ln (\# A).
    \end{align*} 
    For the triple sum involving $\mu$ only, we have used subadditivity of entropy for measures on products of finite sets.} By combining gapped versions of Fekete's lemma and Kingman's theorem~\cite[\S\S{II--III}]{Ra23} with the Shannon--McMillan--Breiman theorem, we then have
    \begin{align}
    \label{eq:cons-Gingman}
        \int [h_\Q - h_\mu] \dd\mu = \inf_{n\in\nn} \frac{F_n + k_n + \tau \ln (\# A)}{n + \tau}.
    \end{align}
    Combining~\eqref{eq:0-eq-q} and~\eqref{eq:cons-Gingman}, we deduce that
    \[
       0 \leq -F_n  \leq k_n + \tau \ln (\# A).
    \] 
    for all~$n\in\nn$. Reinterpreting $-F_n$ as the\,---\,\emph{not} specific\,---\,relative entropy of the $n$-th marginal of~$\mu$ with respect to the $n$-th marginal of~$\Q$, we deduce that $k_n$
    being $O(1)$ implies $\mu \ll \Q$. But if $\Q$ is ergodic, then this in turn implies that $\mu = \Q$.
\end{proof}

\begin{remark}
    The hypothesis $k_n = O(1)$\,---\,as opposed to $k_n = o(n)$ as in the definition of~\ref{it:UD}\,---\,is crucial, as demonstrated by counterexamples in Section~\ref{sec:Examples}.
\end{remark}

\section{A Rigidity problem}
\label{sec:open-prob}

Once Proposition~\ref{thm:abstract-modified-parsing} has established Ingredients~I and~II, one can change point of view and focus on the ``rigidity'' of Lemma~\ref{lem:cross-SMB-gap}. Here, ``rigidity'' of the result is to be understood in the sense of the following problem on what happens to the convergence in this lemma as we cut~$y_1^N$ into (mostly) reasonably long words. 
We work on the probability space $\cA^\nn \otimes \cA^\nn$ equipped with the usual $\sigma$-algebra and the product measure~$\P\otimes\Q$. We say that two random arrays $((\ell_{i,N})_{i=1}^{c_N})_{N=1}^\infty$ and $((\mu_{i,N})_{i=0}^{c_N})_{N=1}^\infty$ of nonnegative integers with 
\[ 
    \mu_{0,N} + \ell_{1,N} + \mu_{1,N} + \ell_{2,N} + \dotsb + \mu_{c_{N}-1,N} + \ell_{c_N,N} + \mu_{c_{N},N} = N
\]
\emph{produce reasonable cuts} if the following properties hold on a set of probability 1:
    \begin{enumerate}
        \item[i.] for every $\lambda \in \nn$, we have $\lambda < \ell_{i,N}$
        whenever $N$ is large enough and $i \in \{1,2,\dotsc,c_N-1\}$;

        \item[ii.] for every $\epsilon > 0$, we have $\mu_{0,N} + \mu_{1,N} + \dotsb + \mu_{c_N,N} < \epsilon N$ whenever $N$ is large enough.
    \end{enumerate}
    We will then use the shorthand 
    \[ 
        L_{i,N} \coloneqq  \sum_{i'=0}^{i}\mu_{i',N} + \sum_{i'=1}^{i} \ell_{i',N}.
    \]
    Whether taking $\ell_{i,N} = |\overline{w}^{(i,N)}|$ for all $i$ and $\mu_{i,N} = 0$ for all $i$, or $\ell_{i,N} = |\underline{w}^{(i,N)}|$ for all $i$ and $\mu_{0,N} = 0$, $\mu_{i,N} = 1$ for all~$i> 0$, Lemma~\ref{lem:a-priori-bounds-a} guarantees that mZM parsings produce reasonable cuts in Theorem~\ref{thm:abstract-modified-parsing}.

\begin{description}
    \item[Problem.] \em Find natural properties of the measures~$\P$ and~$\Q$ that guarantee the following property: whenever $((\ell_{i,N})_{i=1}^{c_N})_{N=1}^\infty$ and $((\mu_{i,N})_{i=0}^{c_N})_{N=1}^\infty$ produce reasonable cuts, then
    \[
        \lim_{N\to\infty} \frac{-\sum_{i=1}^{c_N} \ln\P\left[y_{L_{i-1,N}+1}^{L_{i-1,N}+\ell_{i,N}}\right]}{\sum_{i=1}^{c_N} \ell_{i,N}} = \Sc(\Q|\P)
    \]
    $\Q$-almost surely.
\end{description}

One of the difficulties faced in tackling this problem is that Lemma~\ref{lem:cross-SMB-gap} itself comes with no concrete estimate and that a naive large-deviation approach\,---\,which is implicitly behind Propositions~\ref{prop:diff-minus} and~\ref{prop:diff-plus}\,---\,fails to distinguish between $\Sc(\Q|\P)$ and other points that could be in the subdifferential of the pressure at the origin. 
In some sense, an answer to this problem would only be satisfactory if it at least allowed one to recover the special cases that we are about to discuss over the course of the lemmas below. And ideally, the proof techniques would be unified.

We first show that partial results regarding this open problem\,---\,in conjunction with Lemma~\ref{lem:a-priori-bounds-a}, Theorem~\ref{thm:abstract-modified-parsing} and Lemma~\ref{lem:cross-SMB-gap}\,---\,provide improvements over~\eqref{eq:subdiff-lb} or~\eqref{eq:subdiff-ub} without differentiability. For our first lemma, the improvement is perhaps most interesting when further combined with Theorem~\ref{thm:left-der}; see Corollary~\ref{cor:UD-tau-0} and its use in Section~\ref{ssec:hmm}.

\begin{lemma}
\label{lem:UD-for-III}
    Suppose that $((\ell_{i,N})_{i=1}^{c_N})_{N=1}^\infty$ and $((\mu_{i,N})_{i=0}^{c_N})_{N=1}^\infty$ produce reasonable cuts. If $\P$ satisfies~\ref{it:UD} with $\tau = 0$ and $\Q$ is ergodic, then
    \[
        \limsup_{N\to\infty} \frac{-\sum_{i=1}^{c_N} \ln\P\left[y_{L_{i-1,N}+1}^{L_{i-1,N}+\ell_{i,N}}\right]}{\sum_{i=1}^{c_N} \ell_{i,N}} \leq \Sc(\Q|\P)
    \]
    $\Q$-almost surely.
\end{lemma}

\begin{proof}
    Let $\epsilon>0$ be arbitrary and pick $\lambda \in$ large enough that $k_l \leq \epsilon l$ whenever $l \geq \lambda$.
    The upper-decoupling assumption and Property~i give that, almost surely,
    \begin{align*}
        \ln \P[y_1^{N}] 
            &\leq \sum_{i=1}^{c_N} k_{\mu_{i-1,N}} + \ln\P\left[y_{L_{i-1,N}+1}^{L_{i-1,N}+\ell_{i,N}}\right] + k_{\ell_{i,N}} \\
            &\leq \sum_{i=1}^{c_N} \mu_{i-1,N} \left(\sup_{l \in \nn} \frac{k_l}{l}\right) + \sum_{i=1}^{c_N} \ln\P\left[y_{L_{i-1,N}+1}^{L_{i-1,N}+\ell_{i,N}}\right] + \sum_{i=1}^{c_N} \epsilon \ell_{i,N}.
    \end{align*}
    for $N$ large enough.
    Now using Property~ii, we deduce that, almost surely,
    \begin{align*}
        \ln \P[y_1^{N}] 
            &\leq \epsilon N \left(\sup_{l \in \nn} \frac{k_l}{l}\right) + \sum_{i=1}^{c_N} \ln\P\left[y_{L_{i-1,N}+1}^{L_{i-1,N}+\ell_{i,N}}\right] + \epsilon N.
    \end{align*}
    for $N$ large enough.
    Dividing by $\sum_{i=1}^{c_N} \ell_{i,N}$ and using Property~ii again, we deduce that 
    \[
        \limsup_{N\to\infty} \frac{-\sum_{i=1}^{c_N} \ln\P\left[y_{L_{i-1,N}+1}^{L_{i-1,N}+\ell_{i,N}}\right]}{\sum_{i=1}^{c_N} \ell_{i,N}} \leq \limsup_{N\to\infty} \frac{-\ln \P[y_1^{N}]}{(1-\epsilon)N} + \epsilon N \left(\sup_{l \in \nn} \frac{k_l}{l} + 1\right)
    \]
    Taking~$\epsilon \to 0$, the proposed bound then follows from Lemma~\ref{lem:cross-SMB-gap}.
\end{proof}

\begin{remark}
    By a very similar argument, the conclusion still holds for~\ref{it:UD} with $\tau > 0$ \emph{and} the following lower-bound: 
    \begin{equation}
    \label{eq:add-letter}
        \inf\left\{ \frac{\P[ab]}{\P[a]} : a \in \supp \P_n, b \in \supp \P_1, ab \in \supp \P_{n+1} \right\} \geq \Exp{-k_n}
    \end{equation} 
    for every $n \in \nn$. {While such a lower bound is immediate if the measure~$\P$ satisfies the weak Gibbs condition given below, it could fail for some important examples in which $\tau = 0$.}
\end{remark}

\begin{corollary}
\label{cor:UD-tau-0}
    If, in addition to the hypotheses of Theorem~\ref{thm:sc-gen}, the measure $\P$ satisfies~\ref{it:UD} with $\tau = 0$ and $\Q$ is ergodic, then 
    \begin{subequations}
    \begin{alignat}{1}
        D_- \overline{q}(0)
        &\leq \liminf_{N\to\infty} Q_N(y,x)
        \label{eq:UD0-lb}
        \\
        &\leq \limsup_{N\to\infty} Q_N(y,x) \leq \Sc(\Q|\P)
        \label{eq:UD0-ub}
    \end{alignat}
    \end{subequations}
    for $(\P\otimes\Q)$-almost every~$(x,y)$.
    If, moreover, $\Q$ satisfies~\ref{it:UD} with $k_n=O(1)$, then 
    \begin{equation}
        \lim_{N\to\infty} Q_N(y,x) = \Sc(\Q|\P)
    \end{equation}
    for $(\P\otimes\Q)$-almost every~$(x,y)$.
\end{corollary}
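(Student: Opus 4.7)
The plan is to combine Theorem~\ref{thm:sc-gen} with Lemma~\ref{lem:UD-for-III} to improve the upper bound from $D_+\bar{q}(0)$ to $\Sc(\Q|\P)$, and then invoke Theorem~\ref{thm:left-der} in the ``moreover'' part to pinch $\liminf$ and $\limsup$ together. The lower bound~\eqref{eq:UD0-lb} is already~\eqref{eq:subdiff-lb} from Theorem~\ref{thm:sc-gen} under the stated hypotheses, so there is nothing new to do there.

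For the improved upper bound~\eqref{eq:UD0-ub}, I would fix $\epsilon\in(0,1)$ and $\nu>1$. By Lemma~\ref{lem:a-priori-bounds-a}, the $(\P\otimes\Q)$-probability that some $\overline{w}^{(i,N)}$ with $i<c_N$ has length below $\ell_{-,N}$ is at most $2N^{-\nu}$; off this bad set, Lemma~\ref{lem:UD-for-III} is applicable and gives
\[
    -\sum_{i=1}^{c_N-1}\ln\P[\overline{w}^{(i,N)}] \leq -\ln\P[y_1^N] + o(N).
\]
Chaining this with part~i of Theorem~\ref{thm:abstract-modified-parsing} (which costs at most another $3N^{-\nu}$), on an event of probability at least $1-5N^{-\nu}$ one has
\[
    (1-\epsilon)[c_N-1]\ln N \leq -\ln\P[y_1^N] + o(N).
\]
Since $\sum_N 5N^{-\nu}<\infty$, Borel--Cantelli promotes this to an almost sure pathwise inequality for all $N$ large. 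Dividing by $N-c_N$, using Remark~\ref{rem:cN-is-oN} so that $N/(N-c_N)\to 1$, and invoking Lemma~\ref{lem:cross-SMB-gap} in its ergodic form to identify $-N^{-1}\ln\P[y_1^N]\to \Sc(\Q|\P)$, we obtain
\[
    (1-\epsilon)\limsup_{N\to\infty} Q_N(y,x) \leq \Sc(\Q|\P)
\]
almost surely. Letting $\epsilon\downarrow 0$ along a countable sequence yields~\eqref{eq:UD0-ub}.

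For the ``moreover'' statement, the strengthened hypotheses on $\Q$ are exactly those of Theorem~\ref{thm:left-der}, which gives $D_-q(0) = \Sc(\Q|\P)$. By Lemma~\ref{lem:q-exists}, $q$ and $\bar{q}$ coincide on $(-\infty,0]$, so $D_-\bar{q}(0) = \Sc(\Q|\P)$ as well. Inserting this into~\eqref{eq:UD0-lb} and pairing with~\eqref{eq:UD0-ub} squeezes the limit to $\Sc(\Q|\P)$.

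There is no truly hard step left at this stage: the one genuine input, already handled by Lemma~\ref{lem:UD-for-III}, is the $\tau_n=0$ telescoping that collapses $\sum_i \ln\P[\overline{w}^{(i,N)}]$ into the single quantity $\ln\P[y_1^N]$ up to an $o(N)$ penalty. This algebraic shortcut is what bypasses the large-deviation bookkeeping of Proposition~\ref{prop:diff} and lands us directly on the Moy--Perez value $\Sc(\Q|\P)$ rather than merely within the subdifferential of $\bar{q}$ at the origin.
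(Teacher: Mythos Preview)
Your proof is correct and follows precisely the route the paper intends. The paper does not spell out a proof of this corollary, but the surrounding text makes the intended assembly explicit: Lemma~\ref{lem:UD-for-III} together with Lemma~\ref{lem:a-priori-bounds-a}, Theorem~\ref{thm:abstract-modified-parsing}.i and Lemma~\ref{lem:cross-SMB-gap} for the improved upper bound, and Theorem~\ref{thm:left-der} (via Lemma~\ref{lem:q-exists} to identify $D_-\overline{q}(0)$ with $D_-q(0)$) for the ``moreover'' part\,---\,exactly the chain you execute.
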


Our second partial result concerns the \emph{weak Gibbs} (WG) class of Yuri on a subshift $\Omega'\subseteq \cA^\nn$, i.e.\ the class of measures~$\P$ for which there exists a continuous function~$\phi : \Omega' \to \rr$ and a nondecreasing $\Exp{o(n)}$-sequence $(K_n)_{n=1}^\infty$ such that 
\[ 
    K_n^{-1} \Exp{\sum_{j=0}^{n-1} \phi(T^jy) - np_{\textnormal{top}}(\phi)} \leq \P[y_1^n] \leq K_n \Exp{\sum_{j=0}^{n-1} \phi(T^jy) - np_{\textnormal{top}}(\phi)}
\]
for every $y \in \Omega'$. Here, $p_{\textnormal{top}}(\phi)$, known as the topological pressure, is completely determined by~$\phi$, and provides the appropriate normalization. On subshifts {with suitable specification properties}, $g$-measures and  equilibrium measures for absolutely summable interactions are WG; we do not dwell on this technical issue and instead refer the reader to~\cite{PS20}. While Lemma~\ref{lem:cross-SMB-gap} does not directly apply to WG measures, a well-known analogue provides the same conclusion.

The next lemma\,---\,in conjunction with Lemmas~\ref{lem:a-priori-bounds-a} and Theorem~\ref{thm:abstract-modified-parsing}\,---\,shows again that progress on this open problem can be beneficial.

\begin{lemma}
\label{lem:WG-for-IV}
    Suppose that $((\ell_{i,N})_{i=1}^{c_N})_{N=1}^\infty$ and $((\mu_{i,N})_{i=0}^{c_N})_{N=1}^\infty$ produce reasonable cuts. If $\P$ is WG and is $\Q$ ergodic, then
    \[
        \lim_{N\to\infty} \frac{-\sum_{i=1}^{c_N} \ln\P\left[y_{L_{i-1,N}+1}^{L_{i-1,N}+\ell_{i,N}}\right]}{\sum_{i=1}^{c_N} \ell_{i,N}} = \Sc(\Q|\P)
    \]
    $\Q$-almost surely.
\end{lemma}

\begin{proof}
    Without loss of generality, we assume that $p_{\textnormal{top}}(\phi) = 0$. Let $\epsilon > 0$ be arbitrary and pick $\lambda \in \nn$ large enough that $\ln K_l \leq \epsilon l$ for all~$l \geq \lambda$.
    With any $y \in [y_1^N]$, we have by WG and Property~i of the reasonable cuts that
    \begin{align*}
        \ln \P[y_1^{N}]
            &\leq \ln K_N + \sum_{j=0}^{N-1} \phi(T^jy) \\
            &= \ln K_N +\sum_{j=0}^{\mu_0-1}\phi(T^j y) + \sum_{i=1}^{c_N} \sum_{j=L_{i-1}}^{L_{i-1} + \ell_i + \mu_i - 1} \phi(T^jy) \\
            &\leq \ln K_N + \mu_0\|\phi\|_\infty + \sum_{i=1}^{c_N} \left[\left(\sum_{j=L_{i-1}}^{L_{i-1} + \ell_i - 1} \phi(T^jy)\right) + \mu_i \|\phi\|_\infty \right] \\
            &\leq \ln K_N + \sum_{i=0}^{c_N} \mu_i \|\phi\|_\infty + \sum_{i=1}^{c_N} \left(\ln K_{\ell_i} + \ln \P\left[y_{L_{i-1,N}+1}^{L_{i-1,N}+\ell_{i,N}}\right]\right) \\
            &\leq \ln K_N + \sum_{i=0}^{c_N} \mu_i \|\phi\|_\infty + \sum_{i=1}^{c_N} \epsilon \ell_i + \sum_{i=1}^{c_N} \ln \P\left[y_{L_{i-1,N}+1}^{L_{i-1,N}+\ell_{i,N}}\right] 
    \end{align*}
    for $N$ large enough.
    Now using Property~ii of the reasonable cuts yields that, almost surely,
    \begin{align*}
        \ln \P[y_1^{N}]
            &\leq \ln K_N  + \epsilon N \|\phi\|_\infty + \epsilon N + \sum_{i=1}^{c_N} \ln \P\left[y_{L_{i-1,N}+1}^{L_{i-1,N}+\ell_{i,N}}\right] 
    \end{align*}
    for $N$ large enough. We then conclude as in the proof of Lemma~\ref{lem:UD-for-III}, replacing Lemma~\ref{lem:cross-SMB-gap} with its WG analogue, that 
    \[
        \limsup_{N\to\infty} \frac{-\sum_{i=1}^{c_N} \ln\P\left[y_{L_{i-1,N}+1}^{L_{i-1,N}+\ell_{i,N}}\right]}{\sum_{i=1}^{c_N} \ell_{i,N}} \leq \Sc(\Q|\P).
    \]
    The proof of the complementary lower bound for the limit inferior is obtained in a similar fashion.
\end{proof}

\begin{corollary} 
\label{cor:wG}
    If, in addition to the hypotheses of Theorem~\ref{thm:sc-gen}, the measure $\P$ has the WG property and $\Q$ is ergodic, then 
    \[ 
        \lim_{N\to\infty} Q_N(y,x) = \Sc(\Q|\P)
    \]
    for $(\P\otimes\Q)$-almost every~$(x,y)$.
\end{corollary}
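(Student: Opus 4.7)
My plan is to chain together the three ingredients assembled in the preceding subsections: Theorem~\ref{thm:abstract-modified-parsing} (Ingredients I and II), Lemma~\ref{lem:WG-for-IV} (the WG substitute for Ingredients III and IV), and Lemma~\ref{lem:cross-SMB-gap} (the cross-SMB statement available since $\Q$ is ergodic and $\P$ satisfies~\ref{it:UD}). The strategy sidesteps the cross-entropic pressure entirely: WG gives us enough control to replace the word-by-word log-probabilities by $-\ln \P[y_1^N]$ up to $o(N)$, which is exactly the quantity handled by the cross-SMB theorem.

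Concretely, I would first invoke Lemma~\ref{lem:a-priori-bounds-a} together with Borel--Cantelli (choosing any $\nu > 1$) to deduce that, almost surely, the \emph{a priori} bounds $\ell_{-,N} \leq |\underline{w}^{(i,N)}| < |\overline{w}^{(i,N)}| \leq \ell_{+,N}$ hold for $i < c_N$ (and the upper bound also for $i = c_N$) for all $N$ sufficiently large; in particular the hypothesis ``$|\overline{w}^{(i,N)}| \geq \ell_{-,N}$'' required by Lemma~\ref{lem:WG-for-IV} is satisfied. The same application of Borel--Cantelli to parts i and ii of Theorem~\ref{thm:abstract-modified-parsing} yields that, for every $\epsilon \in (0,1)$, almost surely and for all large $N$,
\[
    (1-\epsilon)[c_N-1]\ln N \leq -\sum_{i=1}^{c_N-1} \ln \P[\overline{w}^{(i,N)}]
    \qquad\text{and}\qquad
    -\sum_{i=1}^{c_N} \ln \P[\underline{w}^{(i,N)}] \leq (1+2\epsilon) c_N \ln N.
\]

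I would then combine these with the two inequalities from Lemma~\ref{lem:WG-for-IV} to sandwich $c_N \ln N$ by $-\ln \P[y_1^N]$ up to $o(N)$ and a factor $(1\pm O(\epsilon))$, and invoke Lemma~\ref{lem:cross-SMB-gap} (which applies because $\Q$ is ergodic and $\P$ satisfies~\ref{it:UD}) to replace $-\ln\P[y_1^N]$ by $N\Sc(\Q|\P) + o(N)$. This produces
\[
    \frac{\Sc(\Q|\P)}{1+2\epsilon} \leq \liminf_{N\to\infty} \frac{c_N \ln N}{N} \leq \limsup_{N\to\infty} \frac{c_N \ln N}{N} \leq \frac{\Sc(\Q|\P)}{1-\epsilon}
\]
almost surely, so letting $\epsilon\downarrow 0$ gives $c_N \ln N / N \to \Sc(\Q|\P)$. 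Finally, Remark~\ref{rem:cN-is-oN} (itself a consequence of Lemma~\ref{lem:a-priori-bounds-a}) provides $c_N = o(N)$, so the denominator satisfies $N - c_N = N(1 - o(1))$, and dividing the numerator by $N$ rather than by $N - c_N$ alters the limit by a factor $1 - o(1)$. Hence $Q_N(y,x) \to \Sc(\Q|\P)$ almost surely, as required.

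There is essentially no serious obstacle here, as all of the hard work has been done upstream: the nontrivial combinatorial/mixing estimates are packaged in Theorem~\ref{thm:abstract-modified-parsing}, the WG regularity is packaged in Lemma~\ref{lem:WG-for-IV}, and the identification of the limit is packaged in Lemma~\ref{lem:cross-SMB-gap}. The only point that deserves care is to verify that the ``good event'' on which the inequalities from Theorem~\ref{thm:abstract-modified-parsing} and Lemma~\ref{lem:WG-for-IV} both hold is eventually realized almost surely; this is a routine Borel--Cantelli argument once one fixes a polynomially decaying tail parameter $\nu > 1$.
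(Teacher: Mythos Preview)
Your proposal is correct and follows precisely the route the paper intends: immediately before stating the corollary, the paper says that Lemma~\ref{lem:WG-for-IV} ``in conjunction with Lemmas~\ref{lem:a-priori-bounds-a} and~\ref{lem:cross-SMB-gap} and Theorem~\ref{thm:abstract-modified-parsing}'' yields the result, and your argument is exactly this chaining, together with the routine Borel--Cantelli step and the observation $c_N = o(N)$ from Remark~\ref{rem:cN-is-oN}.
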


\section{Examples}
\label{sec:Examples}

\subsection{Hidden-Markov measures with finite hidden alphabets} 
\label{ssec:hmm}

A shortcoming of previous rigorous works on the Ziv--Merhav theorem is that measures arising from hidden-Markov models\,---\,which we are about to properly define\,---\,were not covered at a satisfying level of generality. This shortcoming is significant since hidden-Markov models are becoming increasingly popular in applications and simulations, and can display very interesting properties from a mathematical point of view. In~\cite[\S{4.4}]{BGPR}, we were essentially only able to treat them under conditions~\cite{CU03,Yo10} that guaranteed the g-measure property.

We call~$\P$ a \emph{stationary hidden-Markov measure} (HMM) if it can be represented by a tuple $(\pi, P, R)$ where $( \pi, P)$ is a stationary Markov chain on a countable set~$\mathcal{S}$, $R$ is a $(\#\mathcal{S})$-by-$(\#\cA)$ matrix whose rows are nonnegative and each sum to~1, and
\begin{equation*}
    \P[a_1^n] = \sum_{s_1^n \in \mathcal{S}^n} \pi_{s_1} R_{s_1, a_1} P_{s_1, s_2} R_{s_2, a_2}\cdots P_{s_{n-1}, s_{n}} R_{s_n, a_n}
\end{equation*}
for $n \in \nn$ and $a_1^n \in \cA^n$. One particular case of interest is when the matrix~$R$ contains only $0$s and $1$s, in which case we often speak of a function-Markov measure. In this subsection, we restrict our attention to the case where $\mathcal{S}$ is finite. 

In \cite[\S{2.2}]{BCJPPExamples}, it is shown that the class of stationary HMMs coincides with the class of stationary \emph{positive-matrix-product measures} (PMP), which are defined by a tuple $(\pi, \{M_a\}_{a \in \cA})$ where $(\pi, M)$ is a stationary Markov measure on $\Omega$ with $M \coloneqq  \sum_{a \in \cA} M_a$ and
\begin{equation*}
     \P[a_1^n] = \braket{\pi, M_{a_1} \cdots M_{a_n} \mathbf{1}}.
\end{equation*}
From now on, we only consider HMMs which admit a representation with an \emph{irreducible chain}~$( \pi, P)$.

Expressed in terms of $(\pi, P, R)$, it easy to see how one can sample sequences from such a measure\,---\,this is much more straightforward than for the measures of Section~\ref{ssec:stat-mech}\,---\,in order to perform numerical experiments. This allows us to compare the performance of the mZM estimator to the longest-match estimator, whose consistency has been established more generally~\cite{Ko98,CDEJR23w}; see Figures~\ref{fig:hmm0} and~\ref{fig:hmm-others}.

Expressed in terms of $(\pi, \{M_a\}_{a \in \cA})$, some of the decoupling properties of an irreducible HMM are more transparent: 
\begin{itemize}
\label{p:HMM-dec}
    \item Condition~\ref{it:UD} holds with $\tau = 0$ and $k_n = O(1)$;
    \item Condition~\ref{it:SLD} holds with $k_n = O(1)$;
    \item Ergodicity holds.
\end{itemize}
For a pair of such measures, a mild condition for~\ref{it:ND} (via Remark~\ref{rem:ND-decay}) was provided in~\cite[\S{4.4}]{BGPR}. Therefore, the second part of Corollary~\ref{cor:UD-tau-0} applies for pairs of irreducible HMMs with~\ref{it:ND}, and almost sure convergence to $\Sc(\Q|\P)$ holds, as in the case of a differentiable pressure at the origin, even if to the best of our knowledge, the differentiability of the cross-entropic pressure~$q$ at the origin for a pair of HMMs with finite hidden alphabet remains elusive.
It is known that not all HMMs satisfy the WG property; see e.g.~\cite[\S{2.1.2}]{BCJPPExamples}, so almost sure convergence certainly cannot be derived from Corollary~\ref{cor:wG}.

\begin{remark}
    In practice, at a given~$N$, naive implementations of the mZM and ZM algorithms will be much slower than the more well-studied longest-match length estimator as one has $c_N \sim N / \ln N$ for $N$ large under the conditions for which $Q_N \to \Sc$ as $N \to \infty$. Therefore, the complexity for $Q_N$ can be naively thought as $O( N / \ln N)$ times that of computing $\Lambda_N$. However, the foremost advantage of the mZM and ZM estimators is its faster convergence in~$N$, as seen in Figure~\ref{fig:hmm0}, which yields the most practical importance in cases where data access is limited. We do not claim any significant improvement in performance going from ZM to mZM. 
\end{remark}

\subsection{Measures arising from quantum instruments}

There is another important family of measures that shares most of the decoupling properties of HMMs. Letting~$\mathcal{H}$ denote a finite-dimensional Hilbert space, and $\mathcal{B}(\mathcal{H})$, the space of linear operators on~$\mathcal{H}$ with identity by~$\mathbf{1}$, consider a \emph{quantum measurement} described by completely positive maps $(\Phi_a)_{a\in\mathcal{A}}$ on~$\mathcal{B}(\mathcal{H})$ and a faithful initial state~$\rho$ such that $\Phi\coloneqq \sum_{a\in\mathcal{A}}\Phi_a$ satisfies $\Phi[\mathbf{1}]=\mathbf{1}$ and $\rho\circ\Phi=\rho$. As usual,~$\cA$ is assumed to be a finite set. The unraveling of $(\rho, (\Phi_a)_{a\in\mathcal{A}})$ is a shift-invariant measure $\P$ on $\mathcal{A}^{\nn}$ defined by the  marginals
\[
        \P[a_1a_2\dots a_n]
    \coloneqq \textnormal{tr}(\rho(\Phi_{a_1}\circ\dots\circ\Phi_{a_n})[\mathbf{1}]).
\]
Such measures also appear in the literature under the name ``Kusuoka measures''; see e.g.~\cite{Ku89,JOP17}.
In the case where the completely positive map $\Phi$ is irreducible, then, similar to irreducible HMMs, we have
\begin{itemize}
\label{p:RQM-dec}
    \item Condition~\ref{it:UD} holds with $\tau = 0$ and $k_n = O(1)$;
    \item Condition~\ref{it:SLD} holds with $k_n = O(1)$;
    \item Ergodicity holds;
\end{itemize}
see~\cite[\S{1}]{BCJPPExamples}. Also, for a pair of such measures, Condition~\ref{it:ND} will follow given positivity of the entropy of the second measure; see Remark~\ref{rem:ND-left-der} and Theorem~\ref{thm:left-der}. An important difference with HMMs is that, for quantum instruments, we have explicit examples of measures for which WG fails and where the cross-entropic pressure~$q$ is not differentiable at the origin, showing that Corollary~\ref{cor:UD-tau-0} does not yield results covered by either Corollary~\ref{cor:when-diff} or~\ref{cor:wG}. 

\begin{figure}
        \centering
        \includegraphics{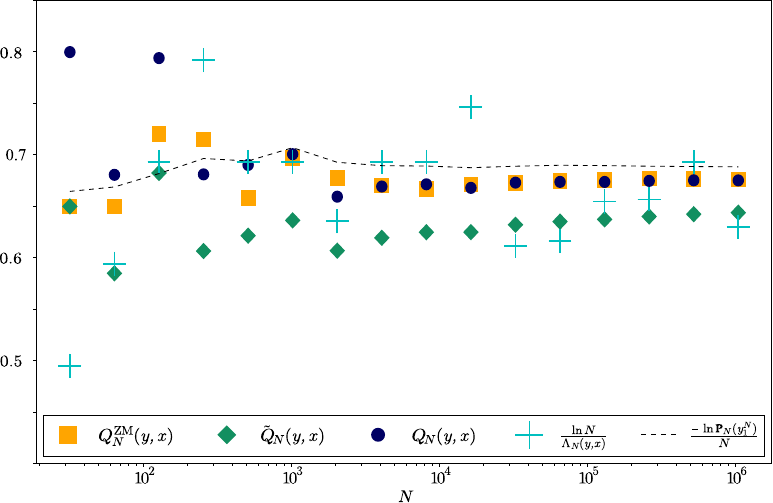} 
        \caption{The convergence of estimators to $\Sc(\Q|\P)$ is illustrated in a numerical experiment. Namely, $Q_N$ is the mZM estimator introduced herein, $\tilde{Q}_N$ is the mZM estimator without the $-c_N$ ``correction'' in the denominator, $Q_N^{\textnormal{ZM}}$ is the original ZM estimator presented in~\cite{BGPR} and~\cite{MZ93}, and~$\ln N / \Lambda_N$ is the longest-match length estimator, which has been shown to be asymptotically consistent under weaker assumptions~\cite{Ko98,CDEJR23w}. They are compared to the sequence in Lemma~\ref{lem:cross-SMB-gap}, which is computable here as we know the marginals of the measure~$\P$, which is of course not the case in practical applications. Here, both $\Q$ and $\P$ are stationary HMMs on $\{\mathsf{0},\mathsf{1}\}^\nn$.}
        \label{fig:hmm0}
\end{figure}

\begin{figure}
    \centering
    \includegraphics{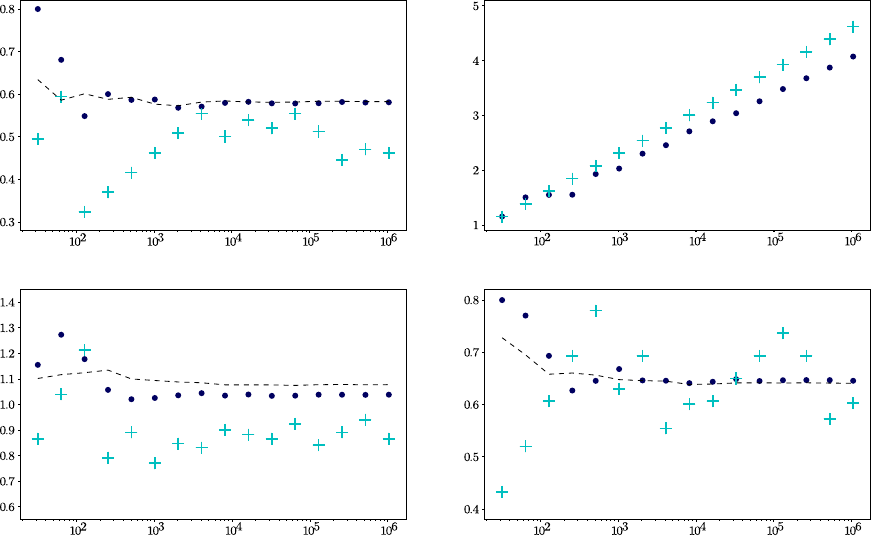}
    
    \caption{The convergence of various estimators to $\Sc(\Q|\P)$ is illustrated in more numerical experiments, subject to the same legend as Figure~\ref{fig:hmm0}: top left, both $\Q$ and $\P$ are the HMM given in Example~4.5 of~\cite[\S{4}]{BGPR}, where~\eqref{eq:add-letter} does not hold; top right, $\Q$ is fully supported Bernoulli measure on~$\{\mathsf{0},\mathsf{1},\mathsf{2}\}^\nn$  and $\P$ is a HMM on~$\Omega$ with $\supp \P \subsetneq \{\mathsf{0},\mathsf{1},\mathsf{2}\}^\nn$; bottom left, $\P$ is a Markov measure and $\Q$ a HMM on $\{\mathsf{0},\mathsf{1},\mathsf{2}\}^\nn$; bottom right, $\Q$ and $\P$ both come from the so-called ``Keep-Switch instrument''\,---\,which belongs to the family of HMMs\,---\,with parameters $(q_1, q_2) = (\tfrac 12,\tfrac 14)$; see~\cite[\S{2.1}]{BCJPPExamples}.}
    \label{fig:hmm-others}
\end{figure}

\subsection{Strong mixing conditions}

Many results in information theory beyond Markovianity are formulated in terms of strong mixing conditions such as those reviewed in~\cite{Bra05}. The strong mixing condition closest to our assumptions is the following: the shift-invariant measure~$\P$ is said to be \emph{$\psi$-mixing} if its \emph{$\psi$-mixing coefficients}, namely
\[ 
    \psi_{\P}(\tau) \coloneqq  \sup_{n,m \in \nn} \max\left\{\left|\frac{\sum_{\xi\in\cA^\tau}\P[a\xi b]}{\P[a]\P[b]}{}-1\right| : a \in \supp \P_n, b \in \supp \P_m \right\},
\]
satisfy $\psi_{\P}(\tau) \to 0$ as $\tau \to \infty$. 
Let us consider a measure~$\P$ that\,---\,to avoid discussing trivialities\,---\,satisfies
\[ 
    \delta \coloneqq  1 - \max_{a\in\cA} \P[a] > 0.
\]
If $\P$ is $\psi$-mixing, then, taking $\tau$ large enough that $(1+\psi_\P(\tau))(1-\delta) < 1$, one can show that:
\begin{itemize}
    \item Condition~\ref{it:UD} holds with this $\tau$ and with $k_n = \ln (1+\psi_\P(\tau))$;
    \item Condition~\ref{it:SLD} hold with this $\tau$ and with $k_n = -\ln (1-\psi_\P(\tau)) + \tau\ln \#\cA$;
    \item Ergodicity holds;
    \item The decay bound in Remark~\ref{rem:ND-decay} holds with $\gamma_+ > \tfrac{1}{\tau+1} \ln ((1+\psi_\P(\tau))(1-\delta))$. 
\end{itemize}
Hence, Theorems~\ref{thm:sc-gen} and~\ref{thm:left-der} apply to pairs of $\psi$-mixing measures. 
However, it should be noted that our conditions do not imply any form of mixing, as seen by considering an irreducible but periodic Markov chain.

\subsection{Long-range ferromagnetic models}
\label{ssec:stat-mech}

In this subsection, we use a particular family of models from statistical mechanics as a mean to further illustrate the applicability of some of our results. While the decoupling and decay properties are discussed in the context of more general summable interactions in~\cite{CR23,BGPR}, we focus here on long-range ferromagnetic models on~$\zz$. In this context, a translation-invariant Gibbs measure~$\P$ on~$\{-1,+1\}^\nn$ can be obtained from a limiting procedure starting from finite-volume (volume~$2n+1$) Hamiltonians 
\begin{align*}
    H_{h,J,n} : \{-1,+1\}^{\{-n,-n+1, \dotsc, n\}} &\to \rr \\
        (a_{-n},a_{-n+1}, \dotsc, a_n)  &\mapsto -\sum_{i=-n}^{n}\sum_{j = i+1}^{n} J_{|j-i|} a_i a_j - h \sum_{i=-n}^n a_i
\end{align*}
for each $n\in\nn$, where $(J_k)_{k=1}^{\infty}$ is a nonnegative, nonincreasing, summable sequence and~$h$ is a real parameter (the field strength), and from an additional positive parameter~$\beta$ (the inverse temperature). We do not describe this well-known procedure, which can be found e.g.\ in~\cite[\S{IV.2}]{Ell}.

While the WG property~\cite[\S{2}]{PS20} and~\ref{it:UD} and~\ref{it:SLD} with $\tau =0$~\cite[\S{9}]{LPS95} are guaranteed in this setup, differentiability of the pressure~$q$ at the origin could fail. 
To see this, take~$\Q$ to be a translation-invariant Gibbs measure with $$J^{\Q}_k = k^{-\frac 32}$$ and $h=0$ at $\beta > \beta_\textnormal{c}$, and $\P$ to be a translation-invariant Gibbs measure with $J^{\P}_k = 0$ and $h=1$ at the same $\beta$. Then, up to a constant factor, $\alpha \mapsto q(\alpha)$ coincides with the specific free energy for $(J^{\Q}_k)_{k=1}^\infty$ as a function of~$h$\,---\,keeping the same $\beta$. The latter is known not to be differentiable at $h = 0$; see~\cite[\S{IV.5}]{Ell}.
Because~\ref{it:ND} causes no issue here~\cite[\S{4.3}]{BGPR}, this example shows that Corollary~\ref{cor:wG} does cover cases not covered by Corollary~\ref{cor:when-diff}.

It is worth noting that for sequences $(J_k)_{k=1}^{\infty}$ that decay fast enough (or are eventually zero), the WG property can be strengthened to the Bowen--Gibbs property (i.e.\ WG with $K_n = O(1)$), so almost sure convergence can equivalently be deduced from Corollary~\ref{cor:when-diff},~\ref{cor:UD-tau-0} or~\ref{cor:wG}. 

\subsection{Hidden-Markov measures with coutable hidden alphabets}
\label{ssec:count-hid-markov-short}

In the setup of Section~\ref{ssec:hmm}, if one drops the assumption that the hidden alphabet~$\mathcal{S}$ is finite, then examples with irreducibility where our decoupling assumptions fail can easily be constructed. While a general understanding of this situation is out of reach, it allows us to construct interesting examples for exploring the overlaps between our different assumptions and results.

For example, within the family of examples discussed in~\cite[\S{A.2}]{CJPS19} and~\cite[\S{2.5}]{CR23}, different choices of parameters will push different boundaries of our results. Indeed, one can choose parameters so that Corollary~\ref{cor:UD-tau-0} yields the almost sure convergence~$Q_N \to \Sc(\Q|\P)$ even though $\Sc(\Q|\P) < D_+\overline{q}(0)$, or so that Corollary~\ref{cor:wG} yields the almost sure convergence~$Q_N \to \Sc(\Q|\P)$ even though $D_-\overline{q}(0) < \Sc(\Q|\P)$\,---\,without contradicting Theorem~\ref{thm:left-der} because the sequence~$(k_n)_{n\in\nn}$ in the~\ref{it:UD} property of~$\Q$ fails to be bounded.

\appendix

\section{A Measure-theoretic lemma}

The following lemma is an elementary estimate along the lines of the Markov or Chebyshev inequality. However, because it is used in the middle of several rather lengthy proofs, and because we could not track a convenient reference, we have opted to state and prove it separately in this appendix.
\begin{lemma}
\label{lem:Cheby-like}
    Let $\bmu$ be a finite Borel measure on~$\rr^M$. Suppose that there exist a number $K>0$ and finite, nontrivial, Borel measures $\mu_1, \dotsc \mu_M$ on~$\rr$ such that,
    as Borel measures on~$\rr^M$, 
    $$
        \bmu \leq K (\mu_1 \otimes \dotsb \otimes \mu_M).
    $$
    If we understand the exponential moments $\mathfrak{m}_i(\varkappa) \coloneqq  \int \Exp{\varkappa t_i} \dd\mu_i(t_i)$ in~$(0,\infty]$ for each $i = 1, \dotsc, M$,
    then
    \begin{align*}
        \bmu\left\{(t_i)_{i=1}^M \in \rr^M : \sum_{i=1}^M t_i \geq t \right\} \leq K\Exp{-\varkappa t} \mathfrak{m}_1(\varkappa) \dotsb \mathfrak{m}_M(\varkappa)
    \end{align*}
    for all~$\varkappa \geq 0$ and~$t \in \rr$.
    Moreover, the same estimate holds for $\sum_{i=1}^M t_i \leq t$ when $\varkappa<0$.
\end{lemma}

\begin{proof}
    Consider first the case $\varkappa \geq 0$ and let 
    $$
        A_t \coloneqq  \left\{(t_i)_{i=1}^M \in \rr^M : \sum_{i=1}^M t_i \geq t \right\}.
    $$
    If one of the exponential moments $\mathfrak{m}_i(\varkappa)$ is infinite, then there is nothing to prove. Now assuming that $\mathfrak{m}_i(\varkappa) < \infty$ for $i = 1, \dotsc, M$, we have
    \begin{align*}
        \bmu(A_t) 
            &\leq K \dotsint_{A_t} \dd\mu_1\dotsb\dd\mu_M \\
            &\leq K \dotsint_{A_t} \Exp{\varkappa\left(\sum_{i=1}^M t_i-t\right)}\dd\mu_1(t_1)\dotsb\dd\mu_M(t_M) \\
            &\leq K\Exp{-\varkappa t} \left(\int_{\rr} \Exp{\varkappa t_1}\dd\mu_1(t_1)\right) \dotsb \left(\int_{\rr}\Exp{\varkappa t_M}\dd\mu_M(t_M)\right),
    \end{align*}
    as desired. 
    With $\mathfrak{r} : \rr \to \rr$ the reflection across the origin,
    the proposed estimate for $\varkappa < 0$ and $\sum_{i=1}^M t_i \leq t$ follows from the above applied to the measures $\mu'_1 = \mu_1 \circ \mathfrak{r}^{-1}, \dotsc, \mu'_M = \mu_M \circ \mathfrak{r}^{-1}$ and $\bmu' = \bmu \circ (\mathfrak{r}\otimes\dotsb\otimes\mathfrak{r})^{-1}$ and the threshold $t' = \mathfrak{r}(t)$.
\end{proof}

\end{document}